\documentclass[11pt,a4paper]{article}
\usepackage[margin=25mm,headheight=15pt]{geometry}
\usepackage{amsmath,amssymb,amsthm,mathtools,booktabs,longtable,array}
\usepackage{fontspec,xcolor,hyperref,fancyhdr,listings}
\hypersetup{colorlinks=true,linkcolor=blue!45!black,citecolor=blue!45!black,urlcolor=blue!45!black}
\allowdisplaybreaks[2]
\newtheorem{theorem}{Theorem}[section]
\newtheorem{lemma}[theorem]{Lemma}
\newtheorem{proposition}[theorem]{Proposition}
\theoremstyle{remark}\newtheorem{remark}[theorem]{Remark}
\newcommand{\R}{\mathbb R}\newcommand{\spn}{\operatorname{span}}
\newcommand{\E}{\widetilde e}\newcommand{\Ric}{\widetilde\rho^B}
\newcommand{\LB}{\mathcal L^B}\newcommand{\diag}{\operatorname{diag}}
\title{A complete parameter classification of left-invariant Bott affine Ricci solitons for two rotated distributions on three-dimensional Lorentzian Lie groups}
\author{FANQI LIN}\date{}
\begin{document}\maketitle
\begin{abstract}
We classify all left-invariant affine Ricci solitons associated with the Bott connections of two fixed rank-two distributions on the seven standard families of three-dimensional Lorentzian Lie groups. The distributions are defined by a fixed orthogonal change of the spacelike frame. A universal adapted-frame calculation provides explicit connection matrices, curvature operators and the symmetrized Ricci tensor. The equation in the orthogonal-complement direction forces every left-invariant solution to be steady. The remaining equations yield elementary parameter families and finitely many real algebraic branches: four normalized branches for $G_2$, six branches for each sign parameter in $G_4$, and two quartic branches in $G_7$ in addition to its elementary families. Resultants, subresultants, exact polynomial reduction and Sturm root isolation establish necessity, sufficiency and exhaustiveness. Degenerate cases are treated separately, including all families with nonunique soliton vector fields. An invertible frame transformation relates the two distribution problems. The classification is in the stated structure parameters; no quotient by isometries, automorphisms or scaling is taken.
\end{abstract}
\noindent\textbf{Keywords:} Lorentzian Lie group; Bott connection; affine Ricci soliton; real algebraic classification; left-invariant geometry.

\noindent\textbf{Mathematics Subject Classification (2020):} 53C50, 53C25, 22E25.
\tableofcontents
\section{Introduction}
Left-invariant geometry turns tensor equations on a Lie group into finite algebraic systems on its Lie algebra. In Lorentzian signature, this reduction remains effective, but signs in the Koszul formula and Ricci contraction require particular care. If the connection depends on a chosen distribution, that distribution is part of the data and must remain visible in the classification.

Wu and Wang studied Bott connections and affine Ricci solitons for three distributions on three-dimensional Lorentzian Lie groups \cite{WuWang}. Homogeneous Lorentzian geometry provides the broader setting \cite{Calvaruso}. Batat and Onda classified algebraic Ricci solitons on the standard families \cite{BatatOnda}; their equation is different from the affine equation considered here. We use these works for the geometric framework and standard models, without transferring results between distinct soliton notions.

Fix a pseudo-orthonormal left-invariant frame $\{e_1,e_2,e_3\}$ with $e_3$ timelike, and set
\begin{equation}\label{eq:rotation}
\E_1=(e_1+e_2)/\sqrt2,\quad\E_2=(e_1-e_2)/\sqrt2,\quad\E_3=e_3.
\end{equation}
We investigate $D_1=\spn\{\E_1,\E_3\}$ and $D_2=\spn\{\E_2,\E_3\}$. For a fixed bracket, replacing the distribution generally changes the Bott connection. Consequently, a transfer between the two problems must include the induced changes in bracket parameters and vector coordinates.

\subsection{Scope and principal results}
We study Bott affine Ricci solitons on three-dimensional Lorentzian Lie groups equipped with a fixed left-invariant Lorentzian metric and a specified distribution. A quadruple \((G,g,D,V)\), where \(G\) is a three-dimensional Lie group, \(g\) is a left-invariant Lorentzian metric on \(G\), \(D\) is one of the specified distributions, and \(V\) is a left-invariant vector field, is called a Bott affine Ricci soliton if there exists a constant \(\mu\in\R\) such that
\begin{equation}\label{eq:soliton}
	\LB_Vg+2\Ric+2\mu g=0.
\end{equation}
We classify all such solitons for the two fixed distributions under consideration. The classification exhausts all real solutions in the displayed structure parameters.
It does not take a quotient by isometries, automorphisms or scale, and it does not assert geodesic completeness.

\begin{theorem}[Overview]\label{thm:overview}
Every left-invariant solution for $G_1,\ldots,G_7$ and either distribution is steady: $\mu=0$. The complete solution sets are given in Sections~\ref{sec:unimodular} and~\ref{sec:nonunimodular}. The family $G_1$ has no solutions. The families $G_3,G_5,G_6$ have elementary parameter families. The normalized $G_2$ problem has four real algebraic branches. For each $\eta=\pm1$, $G_4$ has six branches. The $G_7$ classification contains elementary families and two additional real quartic branches.
\end{theorem}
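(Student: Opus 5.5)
The plan is to reduce everything to one universal computation in the adapted frame $\{\E_1,\E_2,\E_3\}$, and only then specialize to the seven families. Write the brackets of $\E_i$ with arbitrary structure constants $[\E_i,\E_j]=\sum_k c_{ij}^k\E_k$. The metric in this frame is still $\diag(1,1,-1)$, since the change of frame \eqref{eq:rotation} is orthogonal on the spacelike plane.

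For $D_1$ take $D^\perp=\spn\{\E_2\}$; for $D_2$ swap the roles of $\E_1$ and $\E_2$. The Bott connection is then defined as follows.
\begin{itemize}
\item For $X,Y\in D$, $\nabla^B_XY$ is the $D$-component of the Levi-Civita derivative.
\item For $X\in D^\perp$ and $Y\in D$, $\nabla^B_XY=\pi_D[X,Y]$.
\item Symmetrically, for the $D^\perp$ block, $\nabla^B_XY=\pi_{D^\perp}[X,Y]$ when $X\in D$, $Y\in D^\perp$, and $\nabla^B_XY$ is the $D^\perp$-component of the Levi-Civita derivative when $X,Y\in D^\perp$.
\end{itemize}

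From this we get explicit connection matrices $L_i$ (with $\nabla^B_{\E_i}=L_i$) as linear expressions in the $c_{ij}^k$. We use the Koszul formula with the Lorentzian signs. The curvature operators are then $R(\E_i,\E_j)=[L_i,L_j]-L_{[\E_i,\E_j]}$, and we contract to get the Ricci tensor and symmetrize it to obtain $\Ric$.

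Next, for a left-invariant $V=\sum v_k\E_k$, the Lie derivative is $(\LB_Vg)(\E_i,\E_j)=-g([V,\E_i],\E_j)-g(\E_i,[V,\E_j])$. This is linear in $v$. Equation \eqref{eq:soliton} therefore becomes six polynomial equations in $(v,\mu,\text{parameters})$, and these are linear in $(v,\mu)$.

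For each family $G_1,\dots,G_7$, substitute its standard brackets in the $e_i$ and rewrite them in the $\E_i$. This gives the $c_{ij}^k$ as linear combinations of the family parameters $\alpha,\beta,\gamma,\delta,\dots$.

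The claim $\mu=0$ should come from the $(D^\perp,D^\perp)$ diagonal entry, i.e.\ the $(2,2)$ equation for $D_1$. The Bott connection preserves the rank-one bundle $D^\perp$, so the curvature restricted to it is essentially abelian. I expect $\Ric(\E_2,\E_2)$ to cancel against the Lie derivative term $-2g([V,\E_2],\E_2)$, or at least to be expressible through it.

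So the first thing I will verify from the universal formulas is an identity of the form
\[
(\LB_Vg)(\E_2,\E_2)+2\Ric(\E_2,\E_2)=0 \quad\text{identically, or modulo the Jacobi identity.}
\]
If it holds, the $(2,2)$ equation reduces to $2\mu=0$. If it does not hold universally, I will check it family by family using the Jacobi constraints, for instance $\alpha\gamma=0$ for $G_4$ and $G_7$ type conditions.

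With $\mu=0$, the remaining five equations are linear in $v$ with polynomial coefficients in the parameters. I will solve them by case distinction on the rank of the coefficient matrix:
\begin{itemize}
\item Generic rank: eliminate $v$ by Cramer's rule, leaving polynomial compatibility conditions in the parameters.
\item Degenerate ranks: these give the families where the soliton vector field is nonunique; each is treated separately.
\end{itemize}

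For $G_1$ I expect the compatibility conditions to force a contradiction with the nondegeneracy of the algebra, so that there are no solutions. For $G_3,G_5,G_6$ I expect the conditions to factor into linear or quadratic pieces, giving the elementary families.

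For $G_2$, $G_4$ and $G_7$, I will normalize one parameter where scaling permits (only as a device for computation, since no quotient is taken in the statement). I will then compute resultants and subresultants of the compatibility polynomials to eliminate variables, and factor the results. Each irreducible factor gives a candidate branch. Sturm sequences isolate its real roots and rule out spurious components with no real points.

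Sufficiency follows by substituting each branch back, via exact polynomial reduction modulo the defining ideal. Exhaustiveness follows because every case of the rank split is covered. Finally, $D_2$ is handled by the invertible frame transformation $\E_1\leftrightarrow\E_2$, which maps the $D_2$ problem to a $D_1$ problem with transformed parameters. This gives the same qualitative conclusions.

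I expect the main obstacle to be this last algebraic stage for $G_2$, $G_4$ and $G_7$. The difficulties are controlling the degenerate subresultant cases, where leading coefficients vanish, and certifying that exactly four, six (for each $\eta$), and two quartic real branches survive. I will first try to handle it by ordering the eliminations so that the final univariate polynomial is the quartic. Each vanishing leading coefficient would then be treated as a separate lower-dimensional case.
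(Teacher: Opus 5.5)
There is a genuine gap, in your formula for the vector-field term. You write $(\LB_Vg)(\E_i,\E_j)=-g([V,\E_i],\E_j)-g(\E_i,[V,\E_j])$. That is the ordinary Lie derivative $\mathcal L_Vg$. The equation studied here instead uses the Bott term $(\LB_Vg)(X,Y)=g(\nabla^B_XV,Y)+g(X,\nabla^B_YV)$. The two agree for the Levi-Civita connection, but $\nabla^B$ is in general neither metric nor torsion-free, so they differ. You would therefore be classifying a different equation, and the steadiness step fails.

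The cancellation you expect in the $(2,2)$ entry cannot happen, because there is nothing to cancel. Since $\nabla^B$ preserves both $D$ and $D^\perp$, we have $R^B(\E_2,\E_i)\E_2\in D^\perp$, so $\Ric(\E_2,\E_2)=0$ identically. With your Lie derivative the $(2,2)$ equation then reads $\mu=g([V,\E_2],\E_2)$. For $G_2$ and $D_1$, with $V$ as in \eqref{eq:xyz}, this is $\mu=\gamma x+\tfrac12(\alpha-\beta)z$. The only restriction there is $\gamma\ne0$, so no structure constraint forces this to vanish. The other components change as well: your $(1,1)$ entry for $G_2$, $D_1$ picks up an extra $-2\gamma y$. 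Your eliminations would therefore not produce $-bP_-P_+$, $-bQ_1Q_2$ or $C^2(C^2+2C-1)H(C)$, and the branch counts in the statement would not follow.

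With the correct definition, steadiness is immediate and needs no cancellation. For $Y\in D$ one has $\nabla^B_{\E_2}Y=\pi_D[\E_2,Y]\in D$. Also $\nabla^B_{\E_2}\E_2=\pi_{D^\perp}\nabla^L_{\E_2}\E_2=0$, because $\E_2$ has constant unit length. Hence $(\LB_Vg)(\E_2,\E_2)=2g(\nabla^B_{\E_2}V,\E_2)=0$ for constant coefficients, and the $(2,2)$ component is exactly $2\mu$ (Proposition~\ref{prop:steady}).

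From there your algebraic plan is broadly the paper's, and simpler than you fear for $G_2$ and $G_4$. After normalization, three of the five equations in \eqref{eq:master} are linear in the vector coordinates with nonzero constant pivots, so no rank split is needed. Only the factor $b=0$ of the resultant needs separate treatment, since the subresultant denominator is coprime to the sextic factors. $G_7$ is settled via branch ideals.

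Finally, the plain swap $\E_1\leftrightarrow\E_2$ corresponds to $e_2\mapsto-e_2$, which does not keep $G_1$ or $G_7$ in standard form. You must also replace $\E_3$ by $-\E_3$, i.e.\ pass to the frame $(e_1,-e_2,-e_3)$ as in Proposition~\ref{prop:swap}.
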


The proof separates geometric reduction from real algebraic elimination. A common adapted-frame calculation gives five steady equations. All zero-parameter cases are treated before division. Algebraic candidates are then lifted to the original system and certified by exact remainders, while Sturm sequences count and isolate all real roots. The appendices retain both distributions' component systems and a reproducible verification procedure.

\section{Conventions and algebraic preliminaries}
\subsection{Metric, connection and Ricci contraction}
Write $g(e_i,e_j)=\varepsilon_i\delta_{ij}$, with $(\varepsilon_1,\varepsilon_2,\varepsilon_3)=(1,1,-1)$. Both distributions are nondegenerate and admit the orthogonal splitting $TG=D\oplus D^\perp$. We define
\begin{equation}\label{eq:bott}
\nabla^B_XY=\begin{cases}
\pi_D(\nabla^L_XY),&X,Y\in\Gamma(D),\\
\pi_D[X,Y],&X\in\Gamma(D^\perp),\ Y\in\Gamma(D),\\
\pi_{D^\perp}[X,Y],&X\in\Gamma(D),\ Y\in\Gamma(D^\perp),\\
\pi_{D^\perp}(\nabla^L_XY),&X,Y\in\Gamma(D^\perp),
\end{cases}
\end{equation}
where $\nabla^L$ is the Levi--Civita connection. No integrability assumption is imposed. The term Bott connection refers throughout to this four-block formula.

The mixed formulas satisfy the connection axioms. For $X\in D^\perp$, $Y\in D$, one has $\pi_D[X,fY]=X(f)Y+f\pi_D[X,Y]$ and $\pi_D[fX,Y]=f\pi_D[X,Y]$, because the additional term $-Y(f)X$ is removed by projection. The other mixed block is analogous; the equal-distribution blocks inherit the connection properties from $\nabla^L$.

Our sign conventions are
\begin{align}
R^B(X,Y)Z&=\nabla^B_X\nabla^B_YZ-\nabla^B_Y\nabla^B_XZ-\nabla^B_{[X,Y]}Z,\\
\rho^B(X,Y)&=-\sum_{i=1}^3\varepsilon_i g(R^B(X,\E_i)Y,\E_i),\\
\Ric(X,Y)&=\tfrac12\bigl(\rho^B(X,Y)+\rho^B(Y,X)\bigr),\\
(\LB_Vg)(X,Y)&=g(\nabla^B_XV,Y)+g(X,\nabla^B_YV).
\end{align}
In particular, $\rho^B$ is not assumed symmetric, and $\LB_Vg$ is not generally the ordinary Lie derivative. To simplify coefficients, put
\begin{equation}\label{eq:xyz}
V=\sqrt2x\E_1+\sqrt2y\E_2+z\E_3.
\end{equation}
All classification triples are $(x,y,z)$, not the frame coefficients $(\sqrt2x,\sqrt2y,z)$.

\subsection{Standard bracket families}
The models used below are the standard forms in three-dimensional Lorentzian Lie-group geometry \cite{Calvaruso,BatatOnda}. All parameters are real and satisfy the displayed restrictions:
\begin{align*}
G_1:\quad &[e_1,e_2]=\alpha e_1-\beta e_3,\quad
[e_1,e_3]=-\alpha e_1-\beta e_2,\\
&[e_2,e_3]=\beta e_1+\alpha e_2+\alpha e_3,\qquad\alpha\ne0;\\[2pt]
G_2:\quad &[e_1,e_2]=\gamma e_2-\beta e_3,\quad
[e_1,e_3]=-\beta e_2-\gamma e_3,\\
&[e_2,e_3]=\alpha e_1,\qquad\gamma\ne0;\\[2pt]
G_3:\quad &[e_1,e_2]=-\gamma e_3,\quad[e_1,e_3]=-\beta e_2,\quad
[e_2,e_3]=\alpha e_1;\\[2pt]
G_4:\quad &[e_1,e_2]=-e_2+(2\eta-\beta)e_3,\quad
[e_1,e_3]=-\beta e_2+e_3,\\
&[e_2,e_3]=\alpha e_1,\qquad\eta=\pm1;\\[2pt]
G_5:\quad &[e_1,e_2]=0,\quad[e_1,e_3]=\alpha e_1+\beta e_2,\quad
[e_2,e_3]=\gamma e_1+\delta e_2,\\
&\alpha+\delta\ne0,\qquad\alpha\gamma+\beta\delta=0;\\[2pt]
G_6:\quad &[e_1,e_2]=\alpha e_2+\beta e_3,\quad
[e_1,e_3]=\gamma e_2+\delta e_3,\quad[e_2,e_3]=0,\\
&\alpha+\delta\ne0,\qquad\alpha\gamma-\beta\delta=0;\\[2pt]
G_7:\quad &[e_1,e_2]=-\alpha e_1-\beta e_2-\beta e_3,\quad
[e_1,e_3]=\alpha e_1+\beta e_2+\beta e_3,\\
&[e_2,e_3]=\gamma e_1+\delta e_2+\delta e_3,\qquad
\alpha+\delta\ne0,\quad\alpha\gamma=0.
\end{align*}
\subsection{Affine vector-field spaces and normalization}
For fixed structure parameters, the equation is linear in the vector coordinates. If a particular solution exists, the full solution space is $V_0+\ker\mathcal A$, with $\mathcal A$ the coefficient matrix of the five steady equations. This follows by subtracting two solutions and, conversely, adding a homogeneous solution to $V_0$. Free vector directions therefore occur at rank drops, which must not be removed by premature division.

In homogeneous bracket families, simultaneous scaling of structure constants and vector coordinates by $\lambda\ne0$ scales the connection by $\lambda$ and both terms in the steady equation by $\lambda^2$. The normalizations of $G_2$ and $G_7$ are justified in this way, and the scale is restored afterwards. The $G_4$ bracket contains fixed coefficients and uses instead the discrete relation $\eta^2=1$.

\subsection{Algebraic candidates and exact lifting}
A polynomial with an isolating rational interval specifies a real algebraic number exactly. Decimal approximations serve only to identify branches. If elimination gives a factor $P(b)$ and a linear subresultant $b(D(b)a-N(b))$, it is necessary to handle $b=0$, prove that $D$ does not vanish at roots of $P$, and substitute the lifted candidates into the original equations. The condition $\gcd(D,P)=1$ settles the denominator issue. A representative $A(b)$ satisfying $DA-N\equiv0\pmod P$ then gives the unique lift, provided the original residuals vanish modulo $P$.

Thus a resultant alone is insufficient: it gives projected necessary candidates, whereas exact lifting gives sufficiency. For the elimination framework, see \cite{CoxLittleOShea}. In the $G_7$ problem, equality of reduced Gröbner bases over $\mathbb Q$, using the same lexicographic order, certifies equality of the branch ideals. Their real solutions are then determined separately.

\section{Universal adapted-frame formulas}\label{sec:universal}
Choose a pseudo-orthonormal frame with $D=\spn\{f_1,f_3\}$ and $D^\perp=\spn\{f_2\}$. Write
\begin{equation}\label{eq:bracket9}
\begin{aligned}
[f_1,f_2]&=pf_1+qf_2+rf_3,\\
[f_1,f_3]&=sf_1+tf_2+uf_3,\\
[f_2,f_3]&=vf_1+wf_2+kf_3.
\end{aligned}
\end{equation}
The symbol $u$ is unrelated to $\mu$. The coefficients are those of an actual Lie bracket and satisfy its Jacobi identity.

\subsection{Connection and curvature matrices}
\begin{lemma}\label{lem:universal}
Let the $j$th column of $A_i$ represent $\nabla^B_{f_i}f_j$, and the $l$th column of $R_{ij}$ represent $R^B(f_i,f_j)f_l$. Then
\begin{equation}\label{eq:A}
A_1=\begin{pmatrix}0&0&s\\0&q&0\\s&0&0\end{pmatrix},\quad
A_2=\begin{pmatrix}-p&0&v\\0&0&0\\-r&0&k\end{pmatrix},\quad
A_3=\begin{pmatrix}0&0&-u\\0&-w&0\\-u&0&0\end{pmatrix}.
\end{equation}
The corresponding curvature matrices are
\begin{align}
R_{12}&=\begin{pmatrix}
-sr-sv+pq&0&sk+ur-qv\\
0&-pq+rw&0\\
-sk-2sp+ur+qr&0&sr+sv-kq
\end{pmatrix},\\
R_{13}&=\begin{pmatrix}
tp&0&-s^2-tv+u^2\\
0&-sq+uw&0\\
-s^2+tr+u^2&0&-tk
\end{pmatrix},\\
R_{23}&=\begin{pmatrix}
-ur-uv+pw&0&-sv+2uk+up-vw\\
0&kw-qv&0\\
-sv-up+rw&0&ur+uv-kw
\end{pmatrix}.
\end{align}

The remaining curvature matrices satisfy $R_{ji}=-R_{ij}$ and $R_{ii}=0$.
\end{lemma}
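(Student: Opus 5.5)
The computation splits into two stages: first the connection matrices $A_i$ from the four-block formula \eqref{eq:bott}, then the curvature matrices from the definition of $R^B$. In the adapted frame, $f_1,f_3$ span $D$ and $f_2$ spans $D^\perp$, with $g(f_1,f_1)=g(f_2,f_2)=1$ and $g(f_3,f_3)=-1$ (so $f_3$ is the timelike vector, matching $\E_3=e_3$).

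\textbf{Connection matrices.} The entries of the $A_i$ come in three kinds.
\begin{itemize}
\item \emph{Mixed blocks.} These need no metric information. For example, $\nabla^B_{f_2}f_1=\pi_D[f_2,f_1]=-pf_1-rf_3$ and $\nabla^B_{f_2}f_3=\pi_D[f_2,f_3]=vf_1+kf_3$, which give columns one and three of $A_2$. Similarly, $\nabla^B_{f_1}f_2=\pi_{D^\perp}[f_1,f_2]=qf_2$ and $\nabla^B_{f_3}f_2=\pi_{D^\perp}[f_3,f_2]=-wf_2$.
\item \emph{The $D^\perp$ block.} Since $f_2$ is a unit vector, $\nabla^L_{f_2}f_2$ is orthogonal to $f_2$, so $\nabla^B_{f_2}f_2=0$.
\item \emph{The $D$ block.} This requires $\pi_D\nabla^L_{f_a}f_b$ for $a,b\in\{1,3\}$. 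I would use the Koszul formula in the form
\[
2g(\nabla^L_XY,Z)=g([X,Y],Z)-g([Y,Z],X)+g([Z,X],Y),
\]
evaluated only with $Z\in\{f_1,f_3\}$. The components are then recovered with signs, e.g. $\pi_D\nabla^L_Xf_b=g(\cdot,f_1)f_1-g(\cdot,f_3)f_3$. Because $f_1,f_3$ have constant length, $\nabla^L_{f_a}f_a$ has no component along $f_a$, and $g(\nabla^L_{f_a}f_1,f_3)=-g(f_1,\nabla^L_{f_a}f_3)$. So only one Koszul number per $a$ is needed. For $a=1$ it should give $g(\nabla_{f_1}f_1,f_3)=-s$, and with $f_3$ timelike this yields the entry $s$ in position $(3,1)$. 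The torsion-type relation $\nabla_{f_1}f_3-\nabla_{f_3}f_1$ projected to $D$ equals $\pi_D[f_1,f_3]=sf_1+uf_3$, which serves as a consistency check.
\end{itemize}

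\textbf{Curvature matrices.} With the $A_i$ in hand, $R^B(f_i,f_j)$ acts by the matrix
\[
R_{ij}=A_iA_j-A_jA_i-\sum_m c_{ij}^m A_m,
\]
where $[f_i,f_j]=\sum_m c_{ij}^m f_m$. This holds because $\nabla^B_{f_i}(\nabla^B_{f_j}f_l)$ has coordinate vector $A_iA_j$ applied to the $l$th basis vector, as the frame is left-invariant and the coefficients are constant. For example,
\[
R_{12}=[A_1,A_2]-pA_1-qA_2-rA_3 .
\]
Each $R_{ij}$ is then a sum of three $3\times3$ products, and the block shape of the $A_i$ (entries only in positions $\{1,3\}\times\{1,3\}$ and $(2,2)$) explains the zero pattern in the stated matrices. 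The identities $R_{ji}=-R_{ij}$ and $R_{ii}=0$ follow immediately from antisymmetry of the defining formula.

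\textbf{Main obstacle.} The main obstacle is sign bookkeeping in the $D$ block: the timelike $\varepsilon_3=-1$ enters both the Koszul projection and the recovery of coordinates, and a single slip propagates into all three $R_{ij}$. I would check the $A_i$ twice. First, verify that $\nabla^B$ restricted to $D$ is metric on $D$; this means $A_i$ restricted to the $\{1,3\}$ block is $g$-skew, i.e. of the form $\begin{pmatrix}0&x\\x&0\end{pmatrix}$, which the stated matrices are. Second, apply the torsion check above. The Jacobi identity is not needed for the formulas, although one might expect it in simplifications; I would avoid using it so that the stated entries hold verbatim. Finally, I would confirm a few curvature entries symbolically, for instance the $(2,2)$ entry $kw-qv$ of $R_{23}$.
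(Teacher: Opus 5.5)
Your proposal is correct and is essentially the paper's own proof: projected brackets for the mixed blocks, the vanishing $D^\perp$ block, the Koszul formula with the $\varepsilon_3=-1$ sign in the $D$ block (your $g(\nabla^L_{f_1}f_1,f_3)=-s$ is exactly the paper's sample entry), and then $R_{ij}=A_iA_j-A_jA_i-\sum_m c_{ij}^mA_m$. One caution on your sanity check: the $g$-skew form $\left(\begin{smallmatrix}0&x\\x&0\end{smallmatrix}\right)$ holds only for $A_1$ and $A_3$. The $\{1,3\}$ block of $A_2$ is $\left(\begin{smallmatrix}-p&v\\-r&k\end{smallmatrix}\right)$, which comes from projected brackets rather than from $\nabla^L$ and is not skew in general.
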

\begin{proof}
With $[f_i,f_j]=\sum_lC_{ij}^lf_l$, $\nabla^L_{f_i}f_j=\sum_{l=1}^{3}\Gamma^l_{ij}f_l$, the Koszul formula is
\[
2\varepsilon_l\Gamma^l_{ij}=\varepsilon_lC_{ij}^l-\varepsilon_iC_{jl}^i+\varepsilon_jC_{li}^j.
\]
It applies to the two equal-distribution blocks in \eqref{eq:bott}; the mixed blocks use projected brackets. The complete derivative table is
\[
\begin{array}{lll}
\nabla^B_{f_1}f_1=sf_3,&\nabla^B_{f_1}f_2=qf_2,&\nabla^B_{f_1}f_3=sf_1,\\
\nabla^B_{f_2}f_1=-pf_1-rf_3,&\nabla^B_{f_2}f_2=0,&\nabla^B_{f_2}f_3=vf_1+kf_3,\\
\nabla^B_{f_3}f_1=-uf_3,&\nabla^B_{f_3}f_2=-wf_2,&\nabla^B_{f_3}f_3=-uf_1.
\end{array}
\]
For example, $2g(\nabla^L_{f_1}f_1,f_3)=-2s$. Since $f_3$ is timelike, its coordinate coefficient is $s$. Also $\pi_D[f_2,f_1]=-pf_1-rf_3$, giving the first column of $A_2$. The projection of $\nabla^L_{f_2}f_2$ onto the unit line $D^\perp$ is zero.

Curvature is then computed by
\begin{equation}\label{eq:matrixR}
R_{ij}=A_iA_j-A_jA_i-\sum_lC_{ij}^lA_l.
\end{equation}
The last term cannot be omitted for a noncoordinate frame. As a full example, $A_1,A_3$ commute because their restrictions to $\spn\{f_1,f_3\}$ are multiples of the same matrix, and the remaining blocks are scalar. Thus $R_{13}=-sA_1-tA_2-uA_3$. Its potentially nonzero entries are $tp$, $-s^2-tv+u^2$, $-sq+uw$, $-s^2+tr+u^2$, $-tk$ in positions $(1,1),(1,3),(2,2),(3,1),(3,3)$.

For an additional check, the $(3,1)$ entry of $A_1A_2-A_2A_1$ is $-sp-sk$. The bracket correction gives $-ps+qr+ru$, and their sum is the stated $(3,1)$ entry of $R_{12}$. Applying the same matrix formula to the other entries gives the two remaining matrices.
\end{proof}

\begin{remark}[Torsion and metric compatibility]
The torsion is $T^B(f_1,f_2)=T^B(f_2,f_3)=0$, $T^B(f_1,f_3)=-tf_2$. Thus integrability of $D$ is equivalent to $t=0$, also the torsion-free condition here. Metric compatibility is distinct: with $E=\diag(1,1,-1)$, the equations $A_i^TE+EA_i=0$ hold precisely when $p=q=k=w=0$ and $r+v=0$. These restrictions need not hold in the models under study.
\end{remark}

\subsection{Ricci tensor and the vector-field term}
\begin{lemma}\label{lem:ricci}
Writing $S_{ij}=\Ric(f_i,f_j)$, one has
\begin{align}
S_{11}&=s^2-tr-u^2,& S_{22}&=0,& S_{33}&=-s^2-tv+u^2,\\
S_{12}&=-\tfrac12sr+\tfrac12up+pq-rw,\\
S_{13}&=\tfrac12t(k+p),\\
S_{23}&=\tfrac12sk-\tfrac12uv+kw-qv.
\end{align}
For a constant-coefficient field $V=M_1f_1+M_2f_2+M_3f_3$,
\begin{align}
(\LB_Vg)_{11}&=2sM_3,&(\LB_Vg)_{22}&=0,&(\LB_Vg)_{33}&=2uM_1,\\
(\LB_Vg)_{12}&=-pM_1+qM_2+vM_3,\\
(\LB_Vg)_{13}&=-sM_1-uM_3,\\
(\LB_Vg)_{23}&=rM_1-wM_2-kM_3.
\end{align}
\end{lemma}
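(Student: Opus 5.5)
The plan is to read off both tensors directly from the matrices of Lemma~\ref{lem:universal}, using one contraction identity, and then verify entry by entry. The identity is the following. For any vector $W=\sum_l W^l f_l$ we have $g(W,f_i)=\varepsilon_iW^i$, hence $\varepsilon_i g(W,f_i)=W^i$ because $\varepsilon_i^2=1$. This removes every metric sign from the Ricci contraction, since the Ricci contraction pairs the same index twice.

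\textbf{Ricci part.} The frame $\{f_i\}$ is pseudo-orthonormal, so $\rho^B$ may be evaluated in it. The definition then gives
\[
\rho^B(f_a,f_b)=-\sum_{i=1}^3\bigl(R_{ai}\bigr)_{ib},
\]
that is, minus the $(i,b)$ entry of the matrix $R_{ai}$, summed over $i$. I will use $R_{aa}=0$ and $R_{ji}=-R_{ij}$ to express every term through $R_{12}$, $R_{13}$, $R_{23}$.

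Two structural facts shorten the work. First, each $R_{ij}$ has nonzero entries only in positions $(1,1),(1,3),(3,1),(3,3),(2,2)$. Second, each $A_i$ is block diagonal for the splitting $D\oplus D^\perp$, and therefore so is each $R_{ij}$. From these facts, $\rho^B(f_2,f_2)=-(R_{21})_{12}-(R_{23})_{32}=0$, which gives $S_{22}=0$.

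For $S_{11}$, the only contributions are $-(R_{12})_{21}-(R_{13})_{31}=-(R_{13})_{31}=s^2-tr-u^2$, since $(R_{12})_{21}=0$. The entry $S_{33}$ is obtained in the same way from $(R_{31})_{13}=-(R_{13})_{13}$.

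For the off-diagonal entries I compute both $\rho^B(f_a,f_b)$ and $\rho^B(f_b,f_a)$ and average them. For example,
\[
\rho^B(f_1,f_2)=-(R_{12})_{22}=pq-rw,
\]
while
\[
\rho^B(f_2,f_1)=(R_{12})_{11}-(R_{23})_{31}=-sr-sv+pq+sv+up-rw.
\]
Averaging gives $-\tfrac12 sr+\tfrac12 up+pq-rw$, as stated. The entries $S_{13}$ and $S_{23}$ are handled the same way. For $S_{13}$ I expect cancellation of the $s^2$ and $u^2$ terms, leaving $\tfrac12 t(k+p)$.

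\textbf{Lie-derivative part.} Since $V$ has constant coefficients, $\nabla^B_{f_a}V=\sum_j M_j\nabla^B_{f_a}f_j$, which has coordinate vector $A_aM$ with $M=(M_1,M_2,M_3)^T$. Therefore
\[
(\LB_Vg)_{ab}=\varepsilon_b(A_aM)_b+\varepsilon_a(A_bM)_a.
\]
Each entry is then a one-line computation. For instance, $(\LB_Vg)_{33}=-2(A_3M)_3=2uM_1$ and $(\LB_Vg)_{13}=-(A_1M)_3+(A_3M)_1=-sM_1-uM_3$. The remaining entries follow identically from \eqref{eq:A}.

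\textbf{Main obstacle.} The only real difficulty is bookkeeping: keeping track of which argument of $R^B$ is contracted, the antisymmetry signs when converting $R_{21}, R_{31}, R_{32}$ into the listed matrices, and the factor $\varepsilon_3=-1$ in the Lie term, which enters there because the two indices $a$ and $b$ differ, unlike in the Ricci contraction. I would guard against errors by checking the identity $S_{22}=0$, the symmetry $(\LB_Vg)_{ab}=(\LB_Vg)_{ba}$, and one off-diagonal entry fully by hand, as done above for $S_{12}$, before trusting the rest.
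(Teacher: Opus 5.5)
Your proposal is correct and follows the paper's proof essentially step for step. Both use the contraction $\rho^B(f_a,f_b)=-\sum_i (R_{ai})_{ib}$, compute $\rho^B(f_a,f_b)$ and $\rho^B(f_b,f_a)$ separately and average them, and obtain $\LB_Vg=N^TE+EN$ from the columns $A_aM$.

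One small correction to your anticipated $S_{13}$ computation: no $s^2$ or $u^2$ terms arise there. Directly, $\rho^B(f_1,f_3)=-(R_{13})_{33}=tk$ and $\rho^B(f_3,f_1)=(R_{13})_{11}=tp$.
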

\begin{proof}
The identity $g(R_{il}f_j,f_l)=\varepsilon_l(R_{il})^l{}_j$ gives $\rho_{ij}=-\sum_l(R_{il})^l{}_j$. The diagonal entries immediately give those of $S$. The unsymmetrized off-diagonal entries are
\begin{align*}
\rho_{12}&=pq-rw,&\rho_{21}&=-sr+up+pq-rw,\\
\rho_{13}&=tk,&\rho_{31}&=tp,\\
\rho_{23}&=sk-uv+kw-qv,&\rho_{32}&=kw-qv.
\end{align*}
For instance $\rho_{12}=-(R_{12})^2{}_2-(R_{13})^3{}_2=pq-rw$, whereas $\rho_{21}$ has additional contributions. Averaging yields the asserted tensor. Next $\nabla^B_{f_i}V=A_i\boldsymbol M$. If $N$ has these vectors as columns, then $\LB_Vg=N^TE+EN$, which gives all six displayed entries.
\end{proof}

\begin{proposition}[Steadiness and the master system]\label{prop:steady}
Every left-invariant solution satisfies $\mu=0$. Its remaining equations are
\begin{equation}\label{eq:master}
\left\{\begin{aligned}
&sM_3+s^2-tr-u^2=0,\\
&-pM_1+qM_2+vM_3-sr+up+2pq-2rw=0,\\
&-sM_1-uM_3+t(k+p)=0,\\
&rM_1-wM_2-kM_3+sk-uv+2kw-2qv=0,\\
&uM_1-s^2-tv+u^2=0.
\end{aligned}\right.
\end{equation}
\end{proposition}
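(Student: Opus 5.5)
The plan is to write the soliton equation \eqref{eq:soliton} componentwise in the adapted frame $\{f_1,f_2,f_3\}$ and insert the formulas of Lemma~\ref{lem:ricci}. Because the frame is pseudo-orthonormal, $g(f_i,f_j)=\varepsilon_i\delta_{ij}$ with $(\varepsilon_1,\varepsilon_2,\varepsilon_3)=(1,1,-1)$, since $f_3$ spans the timelike direction inside $D$ and $f_2$ spans the spacelike line $D^\perp$. Equation \eqref{eq:soliton} is a symmetric bilinear identity, so it is equivalent to the six scalar equations $(\LB_Vg)_{ij}+2S_{ij}+2\mu\varepsilon_i\delta_{ij}=0$ for $1\le i\le j\le3$. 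All coefficients are constants, since both $V$ and the frame are left-invariant, so no differential conditions remain.

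The first step is the $(2,2)$ component. Lemma~\ref{lem:ricci} gives $(\LB_Vg)_{22}=0$ and $S_{22}=0$, so this equation reduces to $2\mu\varepsilon_2=2\mu=0$. This proves steadiness, and it holds uniformly in the bracket coefficients and in $M_1,M_2,M_3$, so no case distinction is needed. Both zeros should be double-checked against Lemma~\ref{lem:universal}. The $(2,2)$ entry of $A_2$ vanishes, which gives $(\LB_Vg)_{22}=2g(\nabla^B_{f_2}V,f_2)=2(A_2\boldsymbol M)_2=0$. Also $S_{22}=\rho_{22}=-(R_{21})^1{}_2-(R_{23})^3{}_2$, and the $(1,2)$ and $(3,2)$ entries of $R_{12}$ and $R_{23}$ vanish, so $S_{22}=0$.

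With $\mu=0$, I would substitute the remaining five components and divide each by $2$ where appropriate:
\begin{itemize}
\item $(1,1)$: $2sM_3+2(s^2-tr-u^2)=0$, which is the first line of \eqref{eq:master};
\item $(1,2)$: $-pM_1+qM_2+vM_3+2S_{12}=0$, where $2S_{12}=-sr+up+2pq-2rw$, which is the second line;
\item $(1,3)$: $-sM_1-uM_3+t(k+p)=0$, which is the third line;
\item $(2,3)$: $rM_1-wM_2-kM_3+sk-uv+2kw-2qv=0$, which is the fourth line;
\item $(3,3)$: $2uM_1+2(-s^2-tv+u^2)=0$, which is the fifth line.
\end{itemize}
The $\mu$ terms in the $(1,1)$ and $(3,3)$ components are already gone because $\mu=0$ was established first. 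The converse is immediate: if \eqref{eq:master} holds and $\mu=0$, then all six components vanish, and since $\LB_Vg+2\Ric$ is symmetric, \eqref{eq:soliton} holds.

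The only real obstacle is bookkeeping. The symmetrization halves appear in $S_{12},S_{13},S_{23}$ but not on the diagonal, so when doubling these components one must not drop or double a factor. The sign of the timelike metric entry also matters, but it affects only the $\mu$ terms, which have already been eliminated. I would therefore recheck the $(1,2)$ and $(2,3)$ coefficients by recomputing $2S_{12}=\rho_{12}+\rho_{21}$ and $2S_{23}=\rho_{23}+\rho_{32}$ directly from the unsymmetrized entries listed in the proof of Lemma~\ref{lem:ricci}.
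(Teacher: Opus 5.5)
Your proposal is correct and matches the paper's proof: the $(2,2)$ component reduces to $2\mu=0$ because $(\LB_Vg)_{22}=S_{22}=0$. With $\mu=0$, the other five components give \eqref{eq:master} (the diagonal ones halved), and the converse holds because the soliton tensor is symmetric. One small imprecision: $(A_2\boldsymbol M)_2=0$ requires the whole second row of $A_2$ to vanish, not only its $(2,2)$ entry. The row does vanish, since $\nabla^B_{f_2}f_1,\nabla^B_{f_2}f_3\in D$ and $\nabla^B_{f_2}f_2=0$.
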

\begin{proof}
The $(2,2)$ component is $2\mu=0$. The other five independent components give the displayed equations, with the diagonal ones divided by $2$. Conversely, these equations and $\mu=0$ annihilate every entry of the symmetric soliton tensor.
\end{proof}
The constant-coefficient assumption matters: for variable $M_i$, the same component contains $2f_2(M_2)$ and gives $f_2(M_2)+\mu=0$, not steady behavior. We make no classification claim for such fields.

\subsection{The second distribution}
\begin{proposition}\label{prop:swap}
For $D_2$, take $f_1=\E_2,f_2=\E_1,f_3=-\E_3$. If $\theta$ denotes the original parameters, the new parameters $\theta'$ are
\[
\begin{array}{c|c}
G_1&(\alpha,\beta)\mapsto(-\alpha,\beta)\\
G_2,G_3,G_4,G_6&\text{all parameters unchanged, including }\eta\\
G_5,G_7&(\alpha,\beta,\gamma,\delta)\mapsto(-\alpha,\beta,\gamma,-\delta).
\end{array}
\]
The pair $(\theta;x,y,z)$ solves the $D_2$ problem if and only if $(\theta';y,x,-z)$ solves the $D_1$ problem.
\end{proposition}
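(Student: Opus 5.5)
The proof rests on the observation that the master system \eqref{eq:master} depends only on the nine adapted-frame coefficients $(p,q,r,s,t,u,v,w,k)$ of \eqref{eq:bracket9} and on the frame coordinates $(M_1,M_2,M_3)$ of $V$. This holds because Lemma~\ref{lem:universal}, Lemma~\ref{lem:ricci} and Proposition~\ref{prop:steady} were derived for an arbitrary pseudo-orthonormal frame with $D=\spn\{f_1,f_3\}$ and $D^\perp=\spn\{f_2\}$. So it suffices to show two things. First, the $D_2$-adapted frame $f_1=\E_2$, $f_2=\E_1$, $f_3=-\E_3$ for parameters $\theta$ has the same nine coefficients as the $D_1$-adapted frame $f_1=\E_1$, $f_2=\E_2$, $f_3=\E_3$ for parameters $\theta'$. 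Second, the coordinates $(M_1,M_2,M_3)$ agree under $(x,y,z)\mapsto(y,x,-z)$.

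First I check that the $D_2$ frame is admissible. It is pseudo-orthonormal with $f_3$ timelike, and $\spn\{f_1,f_3\}=D_2$, $\spn\{f_2\}=D_2^\perp$. I would not compute the nine coefficients family by family directly. Instead I would realize the frame change through the linear map $\varphi$ with $e_1\mapsto e_1$, $e_2\mapsto -e_2$, $e_3\mapsto -e_3$. A direct check shows $\varphi(\E_1)=\E_2$, $\varphi(\E_2)=\E_1$, $\varphi(\E_3)=-\E_3$. Put $e_1'=e_1$, $e_2'=-e_2$, $e_3'=-e_3$. This is again a pseudo-orthonormal frame with $e_3'$ timelike, and its rotated frame $\E_i'$ built by \eqref{eq:rotation} is exactly the $D_2$-adapted frame above. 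Hence the $D_2$ coefficients for $\theta$ equal the $D_1$ coefficients computed from the brackets of $\{e_i'\}$. It remains to show that $\{e_i'\}$ satisfies the standard brackets of the same family with parameters $\theta'$.

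This last check is a finite sign computation, using $[e_1',e_2']=-[e_1,e_2]$, $[e_1',e_3']=-[e_1,e_3]$ and $[e_2',e_3']=[e_2,e_3]$, and then rewriting the right-hand sides in the primed basis. For $G_1$, one gets $[e_1',e_2']=-\alpha e_1'-\beta e_3'$ and $[e_1',e_3']=\alpha e_1'-\beta e_2'$, which is the $G_1$ form with $\alpha'=-\alpha$, $\beta'=\beta$. The third bracket is checked the same way. For $G_2,G_3,G_4,G_6$, each bracket picks up two compensating signs, so the parameters, including $\eta$, are unchanged. For $G_5$ and $G_7$, the coefficients of $e_1$ in brackets involving $e_1'$ flip sign, which gives $(\alpha,\delta)\mapsto(-\alpha,-\delta)$ with $\beta,\gamma$ fixed. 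In each case I would also confirm that the defining restrictions are preserved: $\alpha\ne0$, $\gamma\ne0$, $\alpha+\delta\ne0$, $\alpha\gamma\pm\beta\delta=0$ and $\alpha\gamma=0$ are each invariant, possibly up to an overall sign. This ensures that $\theta'$ is an admissible parameter point.

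Finally, $V=\sqrt2x\E_1+\sqrt2y\E_2+z\E_3=\sqrt2y\,f_1+\sqrt2x\,f_2-z\,f_3$ in the $D_2$ frame. The $D_1$ frame for $(\theta';y,x,-z)$ gives the same triple $(M_1,M_2,M_3)=(\sqrt2y,\sqrt2x,-z)$. Both directions of the equivalence then follow at once from Proposition~\ref{prop:steady}, and $\mu=0$ holds in each. I expect the main obstacle to be the sign bookkeeping in the $G_5$ and $G_7$ brackets, especially for the $e_3$-components in $G_7$, since a single slipped sign would alter $\theta'$. I would therefore cross-check one family by computing a few of the nine coefficients directly from \eqref{eq:rotation}.
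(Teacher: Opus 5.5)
Your proposal is correct and follows essentially the same route as the paper: both realize the $D_2$-adapted frame as the rotated frame of $(e_1,-e_2,-e_3)$, read off $\theta'$ from the primed brackets, and rewrite $V=\sqrt2y\,f_1+\sqrt2x\,f_2-z\,f_3$, while your appeal to the master system's dependence on the nine adapted coefficients replaces the paper's tensoriality-plus-involution argument for the two directions. One bookkeeping slip: in $G_5$ and $G_7$ the sign change of $\delta$ comes from the $e_2$- and $e_3$-components of $[e_2',e_3']=[e_2,e_3]$ (since $e_2=-e_2'$, $e_3=-e_3'$), not from $e_1$-coefficients of brackets involving $e_1'$, although the map $(\alpha,\beta,\gamma,\delta)\mapsto(-\alpha,\beta,\gamma,-\delta)$ you state is correct.
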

\begin{proof}
The adapted frame results from applying \eqref{eq:rotation} to $(e_1,-e_2,-e_3)$. Computing brackets gives the table. For example, for $G_5$, $[e'_1,e'_3]=-\alpha e'_1+\beta e'_2$ and $[e'_2,e'_3]=\gamma e'_1-\delta e'_2$, and both restrictions are preserved. The other cases are checked in the same way.

In this frame, $V=\sqrt2yf_1+\sqrt2xf_2-zf_3$. The frame change preserves the metric, and the connection definition and soliton equation are tensorial. Applying the same operation twice restores the original data, proving both directions.
\end{proof}
With $U=\left(\begin{smallmatrix}0&1&0\\1&0&0\\0&0&-1\end{smallmatrix}\right)$, one has $U^TEU=E$ and $F^{(2)}=U^TF^{(1)}U$ after the stated substitutions. This also preserves the dimension of each affine vector-field solution space.

\section{Unimodular standard families}\label{sec:unimodular}\label{sec:classify}
All parameters and free coordinates are real, and every solution below has $\mu=0$. We first solve the $D_1$ equations; the second distribution follows from Proposition~\ref{prop:swap}.
\subsection{The family $G_1$}
\begin{theorem}
For $\alpha\ne0$, neither distribution admits a left-invariant Bott affine Ricci soliton on $G_1$.
\end{theorem}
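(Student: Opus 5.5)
The plan is to specialize the master system of Proposition~\ref{prop:steady} to $G_1$ with $D=D_1$, find a component that has no real solution when $\alpha\ne0$, and then obtain $D_2$ from Proposition~\ref{prop:swap}.

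\textbf{Step 1: bracket coefficients.} Take the adapted frame $f_1=\E_1$, $f_2=\E_2$, $f_3=\E_3$, so that $D_1=\spn\{f_1,f_3\}$ and $D_1^\perp=\spn\{f_2\}$. Use $e_1=(f_1+f_2)/\sqrt2$ and $e_2=(f_1-f_2)/\sqrt2$ to rewrite the three brackets in the form \eqref{eq:bracket9}.
\begin{itemize}
\item First, $[f_1,f_2]=-[e_1,e_2]=-\alpha e_1+\beta e_3$. This gives $p=q=-\alpha/\sqrt2$ and $r=\beta$.
\item Next, $[f_1,f_3]=([e_1,e_3]+[e_2,e_3])/\sqrt2=(\beta-\alpha)f_2+(\alpha/\sqrt2)f_3$. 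This gives $s=0$, $t=\beta-\alpha$ and $u=\alpha/\sqrt2$.
\item Finally, $[f_2,f_3]=-(\alpha+\beta)f_1-(\alpha/\sqrt2)f_3$. This gives $v=-(\alpha+\beta)$, $w=0$ and $k=-\alpha/\sqrt2$.
\end{itemize}
I would recheck these coefficients once against the Jacobi identity, or by recomputing directly. A sign slip here is the only real risk in the proof.

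\textbf{Step 2: an obstruction free of the vector field.} Since $s=0$, the first equation of \eqref{eq:master} contains no unknown $M_i$. It becomes $-tr-u^2=0$, that is,
\[
-(\beta-\alpha)\beta-\tfrac12\alpha^2=0
\quad\Longleftrightarrow\quad
\bigl(\beta-\tfrac12\alpha\bigr)^2+\tfrac14\alpha^2=0 .
\]
This has no real solution with $\alpha\ne0$. Hence no triple $(x,y,z)$ and no $\mu$ give a solution for $D_1$; by Proposition~\ref{prop:steady}, $\mu$ would in any case have to vanish. The remaining four equations are then not needed. The main check is that this $(1,1)$ component really is independent of $V$, which holds because $(\LB_Vg)_{11}=2sM_3$ and $s=0$.

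\textbf{Step 3: the second distribution.} By Proposition~\ref{prop:swap}, a $D_2$ solution with parameters $(\alpha,\beta)$ would give a $D_1$ solution with parameters $(-\alpha,\beta)$. Since $-\alpha\ne0$, Step 2 rules this out, so neither distribution admits a solution.
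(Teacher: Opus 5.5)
Your proof is correct and follows the paper's argument. In the adapted frame $s=0$, so the $(1,1)$ component has no vector-field term and, with $\mu=0$, is a positive-definite quadratic in $(\alpha,\beta)$; your $(\beta-\tfrac12\alpha)^2+\tfrac14\alpha^2$ is exactly half of the paper's $(\alpha-\beta)^2+\beta^2$. The only difference is that you obtain $D_2$ from Proposition~\ref{prop:swap} via $\alpha\mapsto-\alpha$, whereas the paper writes the corresponding equation $(\alpha+\beta)^2+\beta^2=0$ directly; the two are equivalent.
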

\begin{proof}
For $D_1$, the $(1,1)$ equation after setting $\mu=0$ is
\[
\alpha^2-2\alpha\beta+2\beta^2=(\alpha-\beta)^2+\beta^2=0.
\]
Both squares must vanish over $\R$, forcing $\alpha=\beta=0$ in contradiction with the restriction. For $D_2$, the corresponding equation is $(\alpha+\beta)^2+\beta^2=0$ and gives the same contradiction. No division by a potentially vanishing parameter is used.
\end{proof}

\subsection{The family $G_2$: four real algebraic branches}
For $\sigma=\pm1$, define
\begin{align}
P_\sigma(T)&=16T^6+64\sigma T^5-96T^4+152\sigma T^3
             -144T^2+88\sigma T-23,\label{eq:P2}\\
A_\sigma(T)&=-\tfrac16\bigl(8T^5+36\sigma T^4-28T^3
                   +70\sigma T^2-51T+31\sigma\bigr).\label{eq:A2}
\end{align}

For a real root $B$ of $P_\sigma$, set
\begin{equation}\label{eq:XYZ2}
\begin{aligned}
A&=A_\sigma(B),&X&=AB+\tfrac12,\\
Z&=A+B+(A-B)X,\\
Y&=\tfrac12\bigl((A+B)Z-AB+B^2+1\bigr)-X.
\end{aligned}
\end{equation}
\begin{theorem}\label{thm:G2}
Take $\lambda\ne0$, $\sigma=\pm1$, a real root $B$ of $P_\sigma$, and
\[
(\alpha,\beta,\gamma)=\lambda(A,B,1).
\]
These are exactly the admissible structures. The unique vector coordinates are $\lambda(X,Y,Z)$ for $D_1$ and $\lambda(Y,X,-Z)$ for $D_2$. Each $P_\sigma$ has exactly two real roots, and $P_-,P_+$ are relatively prime, giving four normalized branches.
\end{theorem}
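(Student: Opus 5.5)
The plan is to reduce the $G_2$ problem for $D_1$ to the master system \eqref{eq:master} of Proposition~\ref{prop:steady}, solve it by elimination, and then transfer the answer to $D_2$ with Proposition~\ref{prop:swap}.

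\textbf{Adapted-frame parameters.} Take $f_1=\E_1$, $f_2=\E_2$, $f_3=\E_3$, so that $D_1=\spn\{f_1,f_3\}$. Express the $G_2$ brackets in the rotated frame \eqref{eq:rotation} to obtain the nine coefficients $p,q,r,s,t,u,v,w,k$ of \eqref{eq:bracket9}. Each coefficient is a linear form in $(\alpha,\beta,\gamma)$ with factors of $1/\sqrt2$. By \eqref{eq:xyz}, $M=(\sqrt2x,\sqrt2y,z)$, and this choice clears the radicals.

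\textbf{Structure of the system.} Substituting into \eqref{eq:master} gives five equations. They are linear in $(x,y,z)$ and homogeneous of degree two in $(\alpha,\beta,\gamma,x,y,z)$. Since $\gamma\ne0$, the scaling remark lets me set $\gamma=1$ and restore $\lambda$ at the end, so $\lambda$ plays the role of $\gamma$.

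\textbf{Solving for the vector coordinates.} I would use three of the equations to solve for $x$, $y$, $z$ in turn. The $(1,1)$ and $(3,3)$ equations involve only $M_3$ and $M_1$ respectively, with coefficients $s$ and $u$. I expect this step to reproduce the triangular formulas \eqref{eq:XYZ2}: $X=AB+\tfrac12$, then $Z$ in terms of $X$, then $Y$. Before dividing, I would treat separately the cases where a pivot coefficient vanishes, that is, linear conditions such as $A=B$ or $A+B=0$. In each such case the remaining equations should force a contradiction, typically a sum of squares or a polynomial with no real root. This check is also what shows the lifted vector is unique, i.e.\ that $\ker\mathcal A=0$ on the branch.

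\textbf{Elimination in $(A,B)$.} Substituting $X,Y,Z$ into the other two equations leaves two polynomial equations $F(A,B)=G(A,B)=0$. Their resultant with respect to $A$ should factor as $P_+(B)P_-(B)$ times extraneous factors. The extraneous factors must be excluded by checking them against the pivot cases; the appearance of $\sigma$ suggests the factorization comes from a substitution like $A\mapsto -A$ or $B\mapsto -B$.

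\textbf{Lifting.} The first subresultant gives the linear relation $D(B)A-N(B)=0$. I would verify that $\gcd(D,P_\sigma)=1$ and that $DA_\sigma-N\equiv0\pmod{P_\sigma}$. This shows that $A=A_\sigma(B)$ is the unique lift, so the admissible structures are exactly those in the statement.

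\textbf{Sufficiency.} Reduce all five original residuals modulo $P_\sigma$ and check that each remainder is zero, following the exact-lifting recipe of the preliminaries.

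\textbf{Root count.} Compute Sturm sequences of $P_\pm$ to show each has exactly two real roots, with isolating intervals. Show $\gcd(P_+,P_-)=1$ over $\mathbb Q$ by a Euclidean algorithm or a nonzero resultant; this gives four distinct normalized branches.

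\textbf{The second distribution.} The $D_2$ statement follows from Proposition~\ref{prop:swap}. For $G_2$ the parameters are unchanged, and the vector becomes $(y,x,-z)$.

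The main obstacle is the elimination step: controlling the degenerate cases where $D$ or a pivot vanishes, and identifying the extraneous resultant factors. I would first compute the resultant explicitly and test each spurious factor directly against the original system.
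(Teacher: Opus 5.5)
Your proposal is correct and follows essentially the same route as the paper: normalize $\gamma=1$, solve $X,Z,Y$ triangularly from the $(3,3)$, $(1,3)$, $(1,2)$ equations, eliminate $a$ by a resultant, lift through the linear subresultant with a gcd check, verify all residuals modulo $P_\sigma$, count roots with Sturm sequences, and transfer to $D_2$ by Proposition~\ref{prop:swap}. The only refinements concern degenerate cases. No pivot case-splits such as $A=B$ or $A+B=0$ arise, because the three pivots used are all constant multiples of $\gamma$, hence nonzero after normalization. The only extraneous resultant factor is $b$, which appears because the leading coefficients of $f_2$ and $g_2$ in $a$ both vanish at $b=0$; it is excluded by direct substitution, since $2a^2-1=0$ and $3a^3+10a=0$ have no common root.
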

\begin{proof}
Since $\gamma\ne0$, normalize by $\lambda=\gamma$ and denote the new variables by $(a,b,X,Y,Z)$. The $(3,3)$ equation is $-2X=-2ab-1$; the $(1,3)$ equation is $Z-(a-b)X=a+b$; the $(1,2)$ equation is $X+Y-(a+b)Z/2=(-ab+b^2+1)/2$. They successively determine $X,Z,Y$ as in \eqref{eq:XYZ2}. Only constant nonzero divisors occur, so no case is lost.

The two remaining equations are equivalent to $f_2=g_2=0$, where
\begin{align}
f_2={}&a^3b-2a^2b^2+2a^2+ab^3-3ab-b^2-1,\label{eq:f2}\\
g_2={}&2a^4b-2a^3b^2+3a^3-2a^2b^3-a^2b\notag\\
&+2ab^4+9ab^2+10a-3b^3+6b.\label{eq:g2}
\end{align}
Their resultant is
\[
\operatorname{Res}_a(f_2,g_2)=-bP_-(b)P_+(b).
\]
The factor $b$ must be examined independently. For $b=0$, the equations are $2a^2-1=0$ and $3a^3+10a=0$. The former reduces the latter to $23a/2=0$, impossible. Hence $b\ne0$.

The final subresultant linear in $a$ is $b(D_2a-N_2)$, with
\begin{align*}
D_2(b)&=288b^6+416b^4+64b^2-46,\\
N_2(b)&=16b^7-96b^5-96b^3+43b.
\end{align*}
Exact Euclidean division gives $\gcd(D_2,P_-P_+)=1$. Thus each candidate root has a unique lift in $a$. For each sign,
\[
D_2A_\sigma-N_2\equiv0\pmod{P_\sigma},\qquad
f_2(A_\sigma(b),b)\equiv g_2(A_\sigma(b),b)\equiv0\pmod{P_\sigma}.
\]
Every real candidate therefore produces a solution of the original system. Direct reduction of all original tensor components gives the same zero residuals. The three linear eliminations already made the vector coordinates unique. Section~\ref{sec:roots} certifies the root counts. Finally restore $\lambda$ and apply the distribution correspondence.
\end{proof}

\subsection{The family $G_3$: free directions}
\begin{theorem}
Solutions exist exactly when $\alpha=\beta=0$. Then the full $D_1$ solution set is $\gamma x=0$, with $y,z$ arbitrary; the full $D_2$ set is $\gamma y=0$, with $x,z$ arbitrary. If also $\gamma=0$, the algebra is abelian and every left-invariant vector field is a solution.
\end{theorem}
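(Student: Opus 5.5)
The plan is to specialize the master system of Proposition~\ref{prop:steady} to $G_3$ in the adapted frame $f_1=\E_1$, $f_2=\E_2$, $f_3=\E_3$ for $D_1$, and then solve by cases on the parameters. Substituting $e_1=(f_1+f_2)/\sqrt2$ and $e_2=(f_1-f_2)/\sqrt2$ into the $G_3$ brackets should give
\[
[f_1,f_2]=\gamma f_3,\quad [f_1,f_3]=\tfrac{\alpha-\beta}2f_1+\tfrac{\alpha+\beta}2f_2,\quad [f_2,f_3]=-\tfrac{\alpha+\beta}2f_1+\tfrac{\beta-\alpha}2f_2 .
\]
In the notation of \eqref{eq:bracket9} this means $p=q=u=k=0$, $r=\gamma$, $s=(\alpha-\beta)/2$, $t=(\alpha+\beta)/2$, $v=-t$ and $w=-s$. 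The vector coordinates are $M_1=\sqrt2x$, $M_2=\sqrt2y$, $M_3=z$, and $\mu=0$ is already forced.

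With these values, \eqref{eq:master} should reduce to the following five equations:
\[
\begin{gathered}
sz+s^2-t\gamma=0,\qquad -tz+s\gamma=0,\qquad sx=0,\\
\gamma x+sy=0,\qquad t^2-s^2=0 .
\end{gathered}
\]
The second uses $-sr-2rw=-\gamma(s-2s)=s\gamma$. I will recheck each of these against the formulas of Lemma~\ref{lem:universal} before using them. The last equation gives $\alpha\beta=0$, i.e.\ $t=\pm s$.

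Next I run the case analysis without dividing by any parameter that might vanish.

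\emph{Case $s\ne0$.} The third equation gives $x=0$, and the fourth then gives $y=0$. With $t=\pm s$, the first two equations become $z+s\mp\gamma=0$ and $\mp z+\gamma=0$. In either sign these force $s=0$, a contradiction, so this case is empty.

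\emph{Case $s=0$.} Then $t=0$ as well, hence $\alpha=\beta=0$. The first, third and fifth equations hold identically, the second reads $0=0$, and the fourth becomes $\gamma x=0$. So $y$ and $z$ are free, and $x$ is free exactly when $\gamma=0$. This matches the affine description $V_0+\ker\mathcal A$, with the rank drop at $\gamma=0$.

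Finally, Proposition~\ref{prop:swap} transfers the result to $D_2$. For $G_3$ the parameters are unchanged, so the $D_2$ set is obtained from the $D_1$ set via $(x,y,z)\mapsto(y,x,-z)$. This yields $\gamma y=0$ with $x,z$ arbitrary. When $\alpha=\beta=\gamma=0$ the algebra is abelian, all coefficients vanish, and every left-invariant field is a solution.

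The main obstacle is bookkeeping rather than theory. The errors most likely to occur are signs in the rotated brackets and in the $(1,2)$ and $(2,3)$ equations, where the $-sr$ and $-2rw$ terms partially cancel. The other risk is mishandling the sign $t=\pm s$ in the case $s\ne0$. I would verify the reduced system by recomputing one component, say $(1,1)$, directly from $R_{12}$ and $R_{13}$.
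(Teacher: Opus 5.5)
Your proposal is correct and follows essentially the paper's route. The $(3,3)$ equation gives $\alpha\beta=0$, the $(1,2)$ and $(1,1)$ equations exclude the case where exactly one of $\alpha,\beta$ is nonzero, the case $\alpha=\beta=0$ leaves only $\gamma x=0$, and Proposition~\ref{prop:swap} gives the $D_2$ result. The only difference is cosmetic: you split on $s\ne0$ versus $s=0$ in the adapted coefficients, with $t=\pm s$ encoding $\beta=0$ or $\alpha=0$, and your five reduced equations agree with the appendix system for $G_3$.
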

\begin{proof}
The $(3,3)$ component gives $\alpha\beta=0$. If $\alpha\ne0,\beta=0$, the $(1,2)$ component gives $z=\gamma$ and the $(1,1)$ component then forces $\alpha^2=0$, a contradiction. If $\alpha=0,\beta\ne0$, the same steps give $z=-\gamma$ and $\beta^2=0$, again impossible. Hence $\alpha=\beta=0$. Substitution into all components leaves precisely $\gamma x=0$, proving both directions. The frame correspondence gives the $D_2$ result.
\end{proof}

For fixed $\gamma\ne0$, there are two free vector coordinates; in the abelian case there are three. For example, with $\gamma=1$, the sole adapted bracket is $[f_1,f_2]=f_3$ and the sole connection derivative is $\nabla^B_{f_2}f_1=-f_3$. All curvature matrices vanish. Thus a nonabelian Lie algebra can carry a flat Bott connection for the selected distribution.

\subsection{The family $G_4$: six branches for each sign}
Define
\begin{align}
Q_1(T)&=16T^6-96T^5+160T^4-56T^3-56T^2+24T+9,\\
Q_2(T)&=16T^6+32T^5-352T^4+760T^3-744T^2+360T-71,\\
B_1(T)&=-\tfrac16(16T^5-96T^4+172T^3-116T^2+10T+15),\\
B_2(T)&=\tfrac1{14}(48T^5+160T^4-852T^3+1116T^2-562T+81).
\end{align}

For $j=1,2$ and a real root $B$ of $Q_j$, let
\begin{equation}\label{eq:XYZ4}
\begin{aligned}
A&=B_j(B),&X&=-AB-\tfrac12,\\
Z&=A+B-(A-B)X,\\
Y&=\tfrac12\bigl(AB-2A-B^2+2B-(A+B)Z-1\bigr)-X.
\end{aligned}
\end{equation}
\begin{theorem}\label{thm:G4}
For each $\eta=\pm1$, the admissible structures are exactly $\alpha=\eta A$, $\beta=\eta B$ with the quantities above. The unique vector coordinates are $(X,Y,\eta Z)$ for $D_1$ and $(Y,X,-\eta Z)$ for $D_2$. The relatively prime polynomials $Q_1,Q_2$ have four and two real roots, respectively, hence six branches for each $\eta$.
\end{theorem}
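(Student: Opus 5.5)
The argument follows the $G_2$ proof: compute the adapted brackets, solve three equations linearly, then eliminate on the remaining two.

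\emph{Adapted brackets.} First I compute the adapted-frame bracket coefficients $(p,q,r,s,t,u,v,w,k)$ of \eqref{eq:bracket9} for $G_4$ with $f_1=\E_1$, $f_2=\E_2$, $f_3=\E_3$, and insert them with $M_1=\sqrt2x$, $M_2=\sqrt2y$, $M_3=z$ into the master system \eqref{eq:master} of Proposition~\ref{prop:steady}. This gives five polynomial equations in $(\alpha,\beta,\eta;x,y,z)$, and $\mu=0$ is already forced. Since $\eta^2=1$, I substitute $\alpha=\eta a$, $\beta=\eta b$, $z=\eta Z$. Every monomial then carries an even power of $\eta$ after division by a common factor, so the system becomes $\eta$-free. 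This explains why both signs yield identical normalized branches, and I will verify it term by term, since the fixed coefficients $-e_2$, $e_3$ and $2\eta$ in the brackets are the only places where $\eta$ enters.

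\emph{Linear eliminations.} As for $G_2$, I expect the following structure.
\begin{itemize}
\item The $(3,3)$ equation is linear in $X$ with constant coefficient, giving $X=-ab-\tfrac12$.
\item The $(1,3)$ equation then gives $Z=a+b-(a-b)X$.
\item The $(1,2)$ equation is linear in $Y$ with constant coefficient and gives $Y$ as in \eqref{eq:XYZ4}.
\end{itemize}
Because only constant nonzero divisors occur, no cases are lost, and the vector coordinates are unique once $(a,b)$ is fixed. Substituting into the $(1,1)$ and $(2,3)$ equations leaves two polynomials $f_4(a,b)$ and $g_4(a,b)$ with rational coefficients.

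\emph{Elimination and lifting.} I compute $\operatorname{Res}_a(f_4,g_4)$ and expect it to factor as $c\,b^m(\text{simple factors})\,Q_1(b)\,Q_2(b)$. Any extraneous factors, such as $b=0$ or leading-coefficient degeneracies, are treated separately by direct substitution into $f_4=g_4=0$, which should give an inconsistent system as in the $G_2$ case. Next I take the last subresultant that is linear in $a$, written $D_4(b)\,a-N_4(b)$, and check by Euclidean division that $\gcd(D_4,Q_1Q_2)=1$. I also verify the congruences
\[
D_4B_j-N_4\equiv0,\qquad f_4(B_j(b),b)\equiv g_4(B_j(b),b)\equiv0\pmod{Q_j}.
\]
Together these give both necessity and sufficiency with a unique lift $a=B_j(b)$. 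As an independent check, I reduce all original tensor components modulo $Q_j$. Relative primality of $Q_1$ and $Q_2$ follows from $\gcd(Q_1,Q_2)=1$ by Euclid, so the branches are disjoint.

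\emph{Root counts and main obstacle.} The real root counts (four for $Q_1$, two for $Q_2$) come from Sturm sequences with rational isolating intervals, as announced for Section~\ref{sec:roots}. The $D_2$ statement follows from Proposition~\ref{prop:swap}, since $G_4$ parameters are unchanged there, which gives $(Y,X,-\eta Z)$. I expect the main obstacle to be the elimination step: confirming the precise resultant factorization and ensuring that every spurious factor and every vanishing of a leading coefficient in $a$ is handled before dividing. A missed degenerate case there would break exhaustiveness. I would handle it by computing the subresultant chain explicitly and checking the leading coefficients of $f_4$ and $g_4$ in $a$ for common real zeros.
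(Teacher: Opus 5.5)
Your proposal is correct and follows essentially the same route as the paper. Both arguments use the $\eta$-normalization $\alpha=\eta a$, $\beta=\eta b$, $z=\eta Z$; solve the $(3,3)$, $(1,3)$, $(1,2)$ equations linearly; compute the resultant $\operatorname{Res}_a(f_4,g_4)=-bQ_1(b)Q_2(b)$ and exclude the exceptional factor $b=0$ by direct substitution (in the paper this reduces to $2a^2+2a-1=0$ with remainder $(21a-15)/2$, forcing the impossible $a=5/7$); lift through the coprime linear subresultant $b(Da-N)$, certified by the congruences modulo $Q_j$; count roots with Sturm sequences; and apply Proposition~\ref{prop:swap} for $D_2$.
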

\begin{proof}
Put $\alpha=\eta a,\beta=\eta b,z=\eta Z,x=X,y=Y$. The identity $\eta^2=1$ reduces the equations to the same $\eta=1$ system. In particular, $x,y$ do not change sign with $\eta$. The $(3,3)$ equation gives $X=-ab-1/2$. The next two linear equations determine $Z,Y$ as displayed. The two residual equations are
\[
f_4=f_2+2a+2b=0,\qquad g_4=g_2+4a^2-24ab+4b^2-8=0.
\]
With this ordering and normalization, exact elimination yields
\begin{equation}\label{eq:g4resultant}
\operatorname{Res}_a(f_4,g_4)=-bQ_1(b)Q_2(b).
\end{equation}

At $b=0$, the first equation is $h(a)=2a^2+2a-1=0$, and the remainder of $g_4(a,0)$ modulo $h$ is $(21a-15)/2$. A common zero would be $a=5/7$, whereas $h(5/7)=71/49\ne0$. Hence $b=0$ is not a solution.

The last linear subresultant is $b(Da-N)$, where
\begin{align*}
D(b)&=288b^6-1088b^5+1440b^4-704b^3-64b^2+180b-42,\\
N(b)&=16b^7-32b^6-96b^5+320b^4-288b^3+52b^2+55b-30.
\end{align*}
One obtains $\gcd(D,Q_1Q_2)=1$ and, for each $j$,
\[
DB_j-N\equiv0\pmod{Q_j},\qquad
f_4(B_j(b),b)\equiv g_4(B_j(b),b)\equiv0\pmod{Q_j}.
\]
This proves that the lifting is unique and that every real candidate yields a solution of the original system. The root counts follow in Section~\ref{sec:roots}. Restoring the sign parameter and applying Proposition~\ref{prop:swap} completes the classification.
\end{proof}

\section{Nonunimodular standard families}\label{sec:nonunimodular}
\subsection{The family $G_5$}
\begin{theorem}\label{thm:G5}
For any $\lambda\ne0$, the full solution sets are
\begin{center}\begin{tabular}{ccc}\toprule
Distribution&$(\alpha,\beta,\gamma,\delta)$&$(x,y,z)$\\\midrule
$D_1$&$(\lambda,-\lambda,0,0)$&$(x,0,z)$\\
$D_1$&$(0,0,-\lambda,\lambda)$&$(x,0,z)$\\
$D_2$&$(\lambda,\lambda,0,0)$&$(0,y,z)$\\
$D_2$&$(0,0,\lambda,\lambda)$&$(0,y,z)$\\\bottomrule
\end{tabular}\end{center}
All coordinates not fixed in the table are arbitrary real numbers.
\end{theorem}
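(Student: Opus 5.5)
The plan is to work entirely inside the universal master system of Proposition~\ref{prop:steady}. I would first compute the adapted-frame structure constants for $D_1$, and then obtain the $D_2$ rows of the table from Proposition~\ref{prop:swap}.

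\textbf{Adapted-frame constants.} With $f_i=\E_i$, invert \eqref{eq:rotation}: $e_1=(f_1+f_2)/\sqrt2$, $e_2=(f_1-f_2)/\sqrt2$, $e_3=f_3$. Substituting into the $G_5$ brackets gives $[f_1,f_2]=-[e_1,e_2]=0$, so $p=q=r=0$. It also gives $u=k=0$ and
\[
s=\tfrac12(\alpha+\beta+\gamma+\delta),\quad t=\tfrac12(\alpha-\beta+\gamma-\delta),\quad v=\tfrac12(\alpha+\beta-\gamma-\delta),\quad w=\tfrac12(\alpha-\beta-\gamma+\delta).
\]
With $M_1=\sqrt2x$, $M_2=\sqrt2y$, $M_3=z$, the system \eqref{eq:master} collapses to
\[
sz+s^2=0,\qquad vz=0,\qquad sx=0,\qquad wy=0,\qquad s^2+tv=0.
\]
This system involves no divisions, so the case analysis can be carried out exactly.

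\textbf{Case analysis for $D_1$.}
\begin{enumerate}
\item Suppose $s\ne0$. Then $x=0$ and $z=-s$, so $vz=0$ forces $v=0$, and the last equation gives $s^2=0$, a contradiction. Hence $s=0$ and $tv=0$.
\item Suppose $s=v=0$. This means $\alpha+\beta=\gamma+\delta=0$. Then $w=2(\alpha-\gamma)=2(\alpha+\delta)\ne0$ by the restriction $\alpha+\delta\ne0$, so $y=0$, while $x$ and $z$ are free. The remaining restriction $\alpha\gamma+\beta\delta=2\alpha\gamma=0$ splits into $\gamma=0$, giving $(\lambda,-\lambda,0,0)$, and $\alpha=0$, giving $(0,0,-\lambda,\lambda)$.
\item Suppose $s=t=0$ with $v\ne0$. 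Then $\gamma=-\alpha$ and $\delta=-\beta$, and the restriction becomes $-\alpha^2-\beta^2=0$. This forces $\alpha=\beta=0$, contradicting $\alpha+\delta\ne0$.
\end{enumerate}
Conversely, each listed structure satisfies all five equations for arbitrary $x,z$ with $y=0$. This gives sufficiency and shows that the affine solution space is exactly two-dimensional, consistent with the kernel description $V_0+\ker\mathcal A$.

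\textbf{Transfer to $D_2$.} Apply Proposition~\ref{prop:swap} with $(\alpha,\beta,\gamma,\delta)\mapsto(-\alpha,\beta,\gamma,-\delta)$ and $(x,y,z)\mapsto(y,x,-z)$. The structure $(\lambda,-\lambda,0,0)$ becomes $(-\lambda,-\lambda,0,0)$, i.e.\ $(\lambda,\lambda,0,0)$ after renaming $\lambda$. The structure $(0,0,-\lambda,\lambda)$ becomes $(0,0,-\lambda,-\lambda)$, i.e.\ $(0,0,\lambda,\lambda)$. The vector condition $y=0$ becomes $x=0$, with $y,z$ arbitrary.

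The main obstacle is bookkeeping rather than algebra. The key risk is getting the signs of $s,t,v,w$ right in the frame change, and then using the restrictions $\alpha+\delta\ne0$ and $\alpha\gamma+\beta\delta=0$ at precisely the points where they exclude degenerate cases. I would double-check the bracket computation against the $G_5$ example given in the proof of Proposition~\ref{prop:swap}.
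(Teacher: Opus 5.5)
Your proof is correct and follows the paper's route. You use the same adapted constants, collapse the system to the same five equations, exclude $s\ne0$ in the same way, and pass to $D_2$ via Proposition~\ref{prop:swap}. The one difference is that you split $tv=0$ into $v=0$ and $t=0$ and apply the restrictions in the original parameters $(\alpha,\beta,\gamma,\delta)$. The paper instead rewrites the restrictions as $s+w\ne0$ and $s^2+t^2-v^2-w^2=0$, and concludes $t\ne0$, $v=0$, $t=\pm w$. One small arithmetic slip: when $s=v=0$ you actually have $w=\alpha-\gamma=\alpha+\delta$, not $2(\alpha-\gamma)$, but this does not affect the conclusion $w\ne0$.
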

\begin{proof}
The adapted coefficients are
\begin{align*}
s&=(\alpha+\beta+\gamma+\delta)/2,&t&=(\alpha-\beta+\gamma-\delta)/2,\\
v&=(\alpha+\beta-\gamma-\delta)/2,&w&=(\alpha-\beta-\gamma+\delta)/2,
\end{align*}
with $p=q=r=u=k=0$. The inverse transformation is
\begin{align*}
\alpha&=(s+t+v+w)/2,&\beta&=(s-t+v-w)/2,\\
\gamma&=(s+t-v-w)/2,&\delta&=(s-t-v+w)/2.
\end{align*}
Thus $\alpha+\delta=s+w$ and $\alpha\gamma+\beta\delta=(s^2+t^2-v^2-w^2)/2$. The restrictions become $s+w\ne0$ and $s^2+t^2-v^2-w^2=0$. The master system is
\[
s(s+z)=0,\quad vz=0,\quad sx=0,\quad wy=0,\quad s^2+tv=0.
\]
If $s\ne0$, then $z=-s\ne0$, hence $v=0$, and the last equation forces $s^2=0$, impossible. Therefore $s=0,w\ne0$. From $tv=0$ and $t^2=v^2+w^2$ we obtain $t\ne0$, then $v=0$, then $t=\pm w$. Inverting the parameter change gives $(w,-w,0,0)$ when $t=w$, and $(0,0,-w,w)$ when $t=-w$. The only vector constraint is $wy=0$, so $y=0$ while $x,z$ are free. All remaining equations vanish. The second distribution follows by the proved correspondence.
\end{proof}

The symmetrized Ricci tensor vanishes in these families; in particular $V=0$ is included. For the example $(1,-1,0,0)$, the only nonzero connection derivative is $\nabla^B_{f_3}f_2=-f_2$, and the full curvature vanishes. Since \([f_1,f_3]=\lambda f_2\ne 0\), these observations do not require the group to be abelian. 

\subsection{The family $G_6$}
Let $r_0=(1-\sqrt5)/2<0$.
\begin{theorem}\label{thm:G6}
The full $D_1$ solution set consists of the following disjoint cases:
\begin{enumerate}
\item $\delta=\gamma=\beta=0$, $\alpha\ne0$, and $(x,y,z)=(q,\alpha-q,z)$ with $q,z\in\R$;
\item $\delta=\gamma=0$, $\alpha\ne0$, $\beta\ne0$, and $(x,y,z)=(0,\alpha,z)$ with $z\in\R$;
\item $\delta=\lambda\ne0$, $\alpha=\beta=0$, $\gamma=\epsilon\sqrt2\lambda$, $\epsilon=\pm1$, and $(x,y,z)=(-\lambda/2,\lambda,0)$;
\item $\delta=\lambda\ne0$, $\alpha=r_0\lambda$, $\gamma=\epsilon\lambda\sqrt{-2r_0}$, $\beta=r_0\gamma$, $\epsilon=\pm1$, and
\[
(x,y,z)=\left(-\lambda/2,\lambda(1+r_0/2),\beta/2\right).
\]
\end{enumerate}
For $D_2$, all structure conditions are unchanged and each triple is replaced by $(y,x,-z)$.
\end{theorem}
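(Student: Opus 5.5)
The plan is to follow the pattern of Theorem~\ref{thm:G5}: compute the adapted coefficients for $D_1$, substitute them into the master system of Proposition~\ref{prop:steady}, split into cases on the parameters before dividing by anything, and then obtain $D_2$ from Proposition~\ref{prop:swap}. For $G_6$ the proposition leaves all parameters unchanged, which gives exactly the replacement $(x,y,z)\mapsto(y,x,-z)$.

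\textbf{Adapted coefficients.} With $f_1=\E_1$, $f_2=\E_2$, $f_3=\E_3$ and $e_2=(f_1-f_2)/\sqrt2$, I expect
\[
[f_1,f_2]=-[e_1,e_2],\qquad [f_1,f_3]=[f_2,f_3]=\tfrac1{\sqrt2}(\gamma e_2+\delta e_3).
\]
This gives
\[
p=-q=-\alpha/\sqrt2,\quad r=-\beta,\quad s=-t=v=-w=\gamma/2,\quad u=k=\delta/\sqrt2,
\]
and $M=(\sqrt2x,\sqrt2y,z)$. I would then substitute these into \eqref{eq:master} and clear the factors $\sqrt2$. The fifth equation should collapse to $\delta(x+\delta/2)=0$, since the $\gamma^2$ terms cancel. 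The first equation should become $\gamma z+\gamma^2/2-\gamma\beta-\delta^2=0$ up to a factor. The remaining three equations are linear in $x,y,z$, with coefficients in $\alpha,\beta,\gamma,\delta$. I will write all five out explicitly before splitting into cases.

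\textbf{Case $\delta=0$.} The restriction $\alpha+\delta\ne0$ gives $\alpha\ne0$. The relation $\alpha\gamma=\beta\delta$ then gives $\gamma=0$. With $\gamma=\delta=0$ we have $s=t=u=v=w=k=0$, so only equations (2) and (4) survive. They are linear, of the form
\[
\alpha\bigl(x+y-\alpha\bigr)\text{-type}=0\quad\text{and}\quad \beta(\ldots)=0 .
\]
If $\beta=0$, one line of solutions in $(x,y)$ remains and $z$ is free, which is case~1. If $\beta\ne0$, the additional equation pins $x=0$, $y=\alpha$, which is case~2. Here I must check that the rank drop occurs exactly at $\beta=0$.

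\textbf{Case $\delta\ne0$.} By the homogeneous scaling of the normalization subsection, I normalize $\delta=\lambda=1$. The fifth equation then gives $x=-1/2$, and the restriction gives $\beta=\alpha\gamma$.
\begin{itemize}
\item If $\gamma=0$, then $\beta=0$ and equation~(1) reads $-\delta^2=0$, which is impossible.
\item Hence $\gamma\ne0$, and equation~(1) determines $z=\beta-\gamma/2+\delta^2/\gamma$.
\item Equation~(3), $-sM_1-uM_3+t(k+p)=0$, and equations~(2) and~(4) then give two polynomial conditions on $(\alpha,\gamma)$ together with a linear determination of $y$.
\end{itemize}
I expect these conditions to factor as $\gamma^2=-2\alpha+2\ldots$ together with $\alpha(\alpha^2-\alpha-1)=0$. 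The branch $\alpha=0$ gives $\gamma^2=2$, which is case~3. The branch $\alpha=r_0$ gives $\gamma^2=-2r_0$, which is case~4. The root $(1+\sqrt5)/2$ must be excluded, presumably because it forces $\gamma^2<0$. This is where the sign condition $r_0<0$ enters.

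\textbf{Main obstacle and verification.} The main obstacle is this final elimination. One has to verify that the two residual equations really reduce to that factorized pair, without losing or adding branches when dividing by $\gamma$. One also has to confirm that the resulting $y$ and $z$ simplify to $y=1+\alpha/2$, with $z=0$ in case~3 and $z=\beta/2$ in case~4. I would try a resultant in $\gamma$ first, then substitute each candidate back into all five original components for sufficiency. Finally, I restore $\lambda$ by scaling, check disjointness of the four cases via $\delta$ and $\beta$, and apply Proposition~\ref{prop:swap}.
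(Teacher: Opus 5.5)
Your plan is correct and follows the paper's proof essentially step for step. It uses the same split at $\delta=0$, where only $x+y=\alpha$ and $\beta x=0$ survive, then $x=-\delta/2$ from the $(3,3)$ equation, the exclusion of $\gamma=0$ via the $(1,1)$ equation, and a final reduction to $\alpha(\alpha^2-\alpha-1)=0$ with the positive root discarded because it forces $\gamma^2<0$. The elimination you flag as the main obstacle is short and needs no resultant: your equation (3) gives $z=\alpha\gamma/(2\delta)=\beta/2$ directly, equation (4) gives $y=\delta+\alpha/2$, and the remaining two equations become $\gamma(\gamma-\beta)=2\delta^2$ and $\alpha(2\alpha\delta+\gamma^2)=0$; your normalization $\delta=1$ is harmless, although the paper keeps $\delta$ general.
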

\begin{proof}
If $\delta=0$, then $\alpha\ne0$ and $\alpha\gamma=\beta\delta$ implies $\gamma=0$. The remaining system is exactly $x+y=\alpha$, $\beta x=0$. Splitting $\beta=0$ and $\beta\ne0$ gives cases 1 and 2, including all free directions.

Suppose $\delta\ne0$. The $(3,3)$ equation gives $x=-\delta/2$. Multiplication of the $(1,3)$ equation by $4$ gives
\[
-2\gamma x-2\delta z=-\gamma(\alpha-\delta).
\]
Substitution cancels the $\gamma\delta$ terms, yielding $z=\alpha\gamma/(2\delta)=\beta/2$. The $(1,1)$ equation becomes $\gamma(\gamma-\beta)=2\delta^2$, so $\gamma\ne0$ and the next division is valid. The $(2,3)$ component gives $y=\delta+\alpha/2$. Substitution into the $(1,2)$ component gives $2\alpha^2+\beta\gamma=0$; multiplying by $\delta$ and using the structure constraint yields
\[
\alpha(2\alpha\delta+\gamma^2)=0.
\]
For $\alpha=0$, we have $\beta=0$ and $\gamma^2=2\delta^2$, which is case 3. For $\alpha\ne0$, we have $\gamma^2=-2\alpha\delta>0$. The equation $\gamma(\gamma-\beta)=2\delta^2$ then yields $\alpha^2-\alpha\delta-\delta^2=0$. Thus $r=\alpha/\delta$ satisfies $r^2-r-1=0$ and $r<0$. Only $r_0$ is admissible; the positive root would force a negative value for $\gamma^2$. This gives case 4.

Conversely, direct substitution verifies every listed case and the structure restrictions. In case 4, $r_0+1\ne0$ ensures $\alpha+\delta\ne0$. Both signs of $\gamma$ are accounted for by $\epsilon$. The transformation gives the $D_2$ classification.
\end{proof}
The $\delta=0$ families are solved from the original equations, rather than by substituting into formulas with $1/\delta$. Their larger vector-field spaces reflect genuine rank drops.

\subsection{The family $G_7$}
Define
\begin{equation}\label{eq:quartic}
H(C)=C^4-3C^2+2C-1.
\end{equation}
Let $\mathcal T$ contain the tuples $(B,C,X,Y,Z)$ specified by:
\begin{enumerate}
\item[(a)] $B=-1$, $C=0$, $X,Y\in\R$, $Z=X+Y+1$;
\item[(b)] $B=-1\pm\sqrt2$, $C=0$, and
\[
X=\frac{B-1}{2(B+1)},\quad Y=-1,\quad Z=\frac{B}{B+1};
\]
\item[(c)] $B=0$, $C=-1\pm\sqrt2$, $(X,Y,Z)=(C/2,-1,0)$;
\item[(d)] $C$ is either real root of $H$, and
\begin{equation}\label{eq:G7quartic}
\begin{aligned}
B&=-C^3+3C-2,&X&=(1-C^2)/2,\\
Y&=-(C^3+C^2-2C+3)/2,&Z&=(1-C-C^2)/2.
\end{aligned}
\end{equation}
\end{enumerate}
\begin{theorem}\label{thm:G7}
For every $\lambda\ne0$, all solutions are as follows.
\begin{enumerate}
\item If $\alpha\ne0$, then necessarily $\gamma=\delta=0$. For $D_1$,
\[
\alpha=\lambda,\quad\beta=\lambda(1\pm\sqrt2),\quad(x,y,z)=\lambda(-1/2,-1,-1).
\]
For $D_2$,
\[
\alpha=\lambda,\quad\beta=-\lambda(1\pm\sqrt2),\quad(x,y,z)=\lambda(1,1/2,-1).
\]
\item If $\alpha=0$, then $\delta=\lambda\ne0$. For every tuple in $\mathcal T$, the $D_1$ solution is
\[
\beta=\lambda B,\quad\gamma=\lambda C,\quad(x,y,z)=\lambda(X,Y,Z).
\]
The $D_2$ solution is
\[
\beta=-\lambda B,\quad\gamma=-\lambda C,\quad(x,y,z)=\lambda(-Y,-X,Z).
\]
\end{enumerate}
\end{theorem}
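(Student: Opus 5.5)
We follow the pattern of the $G_2$, $G_4$ and $G_6$ proofs and work throughout with the $D_1$ problem. First we compute the adapted coefficients $p,q,r,s,t,u,v,w,k$ of \eqref{eq:bracket9} for the frame $f_1=\E_1$, $f_2=\E_2$, $f_3=\E_3$ applied to the $G_7$ brackets. Each coefficient is a linear combination of $\alpha,\beta,\gamma,\delta$ with coefficients in $\frac1{\sqrt2}\mathbb Z$. We then substitute $V$ from \eqref{eq:xyz}, so that $M_1=\sqrt2x$, $M_2=\sqrt2y$, $M_3=z$, and we invoke Proposition~\ref{prop:steady}. This gives $\mu=0$ and five equations that are bilinear in the structure constants and the unknowns $(x,y,z)$ and quadratic in the structure constants; the $\sqrt2$ factors should cancel with the chosen normalization of $V$. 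The constraint $\alpha\gamma=0$ splits the problem into the cases $\alpha\ne0,\gamma=0$ and $\alpha=0$. The $D_2$ statements then follow from Proposition~\ref{prop:swap} with $(\alpha,\beta,\gamma,\delta)\mapsto(-\alpha,\beta,\gamma,-\delta)$ and $(x,y,z)\mapsto(y,x,-z)$. For case~2 this takes the $D_1$ data $(0,\lambda B,\lambda C,\lambda)$ with vector $\lambda(X,Y,Z)$ to the form stated once we rescale by $\lambda\mapsto-\lambda$.

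\textbf{Case $\alpha\ne0$.} Here $\gamma=0$, and we normalize $\alpha=1$ by the homogeneity scaling. We first use the diagonal equations $(1,1)$ and $(3,3)$, which are linear in $z$ and $x$ respectively, to eliminate unknowns, paying attention to whether $s$ or $u$ can vanish. Substituting into the remaining three equations, we expect to obtain $\delta=0$ together with a quadratic $\beta^2-2\beta-1=0$, whose roots are $\beta=1\pm\sqrt2$, and a unique vector $(-1/2,-1,-1)$. Every division will be preceded by a separate treatment of the vanishing of the divisor, as in Theorem~\ref{thm:G6}. We close the case by direct substitution into all five equations.

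\textbf{Case $\alpha=0$.} Then $\delta\ne0$, and we normalize $\delta=1$, writing $\beta=B$, $\gamma=C$. The equations are linear in $(X,Y,Z)$ with a coefficient matrix depending on $(B,C)$. We split according to the rank of this matrix.
\begin{itemize}
\item At a rank drop, which we expect at $B=-1$, $C=0$, we solve the original system directly to get the two-parameter family (a).
\item On the generic stratum we eliminate $X,Y,Z$ and obtain two polynomial residuals in $(B,C)$. We factor their resultant in $B$; we expect it to give the components $C=0$, $B=0$, and $H(C)$ times some spurious factors.
\item On $C=0$ we solve directly, obtaining $B^2+2B-1=0$, which is branch (b), excluding $B=-1$.
\item On $B=0$ we obtain $C^2+2C-1=0$, which is branch (c).
\item On the remaining factor we lift via the linear subresultant $D(C)B-N(C)$. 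We check $\gcd(D,H)=1$ and verify that $B=-C^3+3C-2$ solves it modulo $H$. We then check that all original residuals vanish modulo $H$, and we confirm this ideal equality by comparing reduced lex Gröbner bases over $\mathbb Q$.
\end{itemize}
Sturm sequences show that $H$ has exactly two real roots; for instance $H(0)=-1<0$ and $H(\pm2)>0$ suggest one root in each sign. We also must check that branches (b), (c) and (d) remain inside the generic stratum, so that no duplication or omission occurs with (a).

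The main obstacle is the exhaustiveness of the $\alpha=0$ case. Specifically, we must identify every rank-drop locus of the linear system in $(X,Y,Z)$ before dividing, and we must discard the extraneous resultant factors rigorously, either by showing that their lifts violate an original equation or by showing that they have no real roots. I would first compute the $3\times3$ minors to locate the rank-drop loci, and only then run the resultant and subresultant computation on the complement.
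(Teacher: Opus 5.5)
Your plan is workable, but it takes a genuinely different route from the paper. The paper never divides by a pivot in $G_7$.
\begin{itemize}
\item For $\alpha\ne0$ it certifies, by equality of reduced lex Gr\"obner bases over $\mathbb Q$, that the five equations generate the same ideal as a triangular system ending in $d(d^2+d+1)$. So $\delta=0$, $\beta^2-2\beta-1=0$ and the uniqueness of $(x,y,z)$ come out without any split on $s$ or $u$.
\item For $\alpha=0$ it shows that $C^2(C^2+2C-1)H(C)$ lies in the ideal of the full system, vector unknowns included. It then adds each factor back to the ideal and reads off equivalent triangular bases. Family (a) appears as the factor $B+1$ inside the $C=0$ basis, and no extraneous factors arise.
\end{itemize}
Your scheme of pivot elimination, resultant and subresultant lifting is the $G_2$, $G_4$ method. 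It is more hand-checkable. For instance, for $\alpha\ne0$ the strata $s=0$ and $u=0$ give the single candidates $(\beta/\alpha,\delta/\alpha)=(-\tfrac12,-\tfrac12)$ and $(-1,1)$, and both violate the $(2,3)$ equation. In the $\alpha=0$ case, however, it needs more stratification than you describe.

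\textbf{Rank-drop points versus pivot lines.} The rank-drop locus of the $5\times3$ coefficient matrix is $\{(B,C)=(-1,0),(0,1)\}$; the point $(0,1)$ carries no solution. Passing to two residuals, though, requires dividing by particular pivots, e.g.\ $B+1$ from the $(3,3)$ equation and $B+C+1$ from the $(1,1)$ equation. These vanish along whole lines on which the rank is still $3$, so working on the complement of the rank-drop points is not enough. The lines must be solved from the original equations:
\begin{itemize}
\item On $B=-1$ the $(3,3)$ equation reduces to $C=0$.
\item On $B+C+1=0$ the $(1,1)$ equation forces $B=\pm1$. At $(1,-2)$ the other equations force $(X,Y,Z)=(-2,-1,-1)$, which violates the $(2,3)$ equation.
\end{itemize}
Moreover, any residual cleared of the denominator $B+C+1$ vanishes where that denominator and the numerator of $Z$ both vanish, in particular at $(1,-2)$. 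So your resultant will contain the spurious real factor $C+2$. Your rule of testing lifts against an \emph{original} equation, rather than against the residuals, is exactly what disposes of it.

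\textbf{Two small slips.}
\begin{itemize}
\item A resultant in $B$ is a polynomial in $C$ alone. The $B=0$ branch therefore shows up as the factor $C^2+2C-1$, and $B=0$ must come from the lift.
\item $H(-2)=-1<0$, so the negative root of $H$ lies in $(-3,-2)$ and is not bracketed by $H(\pm2)$. This is harmless, since the Sturm count is what you actually rely on.
\end{itemize}
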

\begin{proof}
\emph{Step 1: $\alpha\ne0$.} The constraint $\alpha\gamma=0$ gives $\gamma=0$. Normalize all structure and vector coordinates by $\alpha$, defining $b=\frac{\beta}{\alpha}$, $d=\frac{\delta}{\alpha}$, and $(X,Y,Z)=\frac{1}{\alpha}(x,y,z)$.  Over $\mathbb Q$, the five equations generate the same ideal as
\begin{equation}\label{eq:G7gb1}
\begin{cases}
2X+1-bd-d^2+d=0,\\
2Y+2+3d=0,\\
2Z+2-bd-d^2+d=0,\\
b^2+2bd-2b-d^2-2d-1=0,\\
d(d^2+d+1)=0.
\end{cases}
\end{equation}
The equality is certified by reducing both generating sets to their reduced lexicographic Gröbner bases with order $X>Y>Z>b>d$. Thus the replacement preserves the original zero set in both directions, rather than retaining only elimination consequences.

Since $d^2+d+1=(d+1/2)^2+3/4>0$ over $\R$, the final equation implies $d=0$. Next $b^2-2b-1=0$ gives $b=1\pm\sqrt2$, and the first three equations give $(X,Y,Z)=(-1/2,-1,-1)$. 

\emph{Step 2: $\alpha=0$.} The trace restriction gives $\delta\ne0$. Normalize to $(\alpha,\beta,\gamma,\delta)=(0,B,C,1)$. Lexicographic elimination in $X>Y>Z>B>C$ gives
\begin{equation}\label{eq:G7elim}
C^2(C^2+2C-1)H(C)=0.
\end{equation}
Every solution therefore falls into $C=0$, $J(C)=C^2+2C-1=0$, or $H(C)=0$. We add each relation to the original ideal instead of dividing by any factor.

For $C=0$, an equivalent generating set is
\begin{equation}\label{eq:G7c0}
\begin{cases}
X+Y-Z+B^2/4+B/2+5/4=0,\\
(B+1)(Y+1)=0,\\
B^2/2+(B+1)Z-1/2=0,\\
(B+1)(B^2+2B-1)=0.
\end{cases}
\end{equation}
If $B=-1$, the only remaining condition is $Z=X+Y+1$, leaving two free coordinates. Otherwise $B^2+2B-1=0$, so $B+1\ne0$ and $Y=-1$. The third equation gives $Z=(1-B^2)/(2(B+1))=B/(B+1)$; the first then gives $X=(B-1)/(2(B+1))$. These are (a) and (b). Dividing first by $B+1$ would lose the entire family (a).

For $J(C)=0$, the branch ideal has the equivalent triangular basis
\[
X-C/2=0,\quad Y+1=0,\quad Z=0,\quad B=0,\quad J(C)=0,
\]
which gives (c). For $H(C)=0$, an equivalent basis is
\[
\begin{gathered}
X+(C^2-1)/2=0,\quad Y+(C^3+C^2-2C+3)/2=0,\\
Z+(C^2+C-1)/2=0,\quad B+C^3-3C+2=0,\quad H(C)=0.
\end{gathered}
\]
This gives (d). Each equality of ideals is checked in the same polynomial ring over $\mathbb Q$, ensuring sufficiency as well as necessity. 

The three conditions on $C$ are disjoint. Neither $J$ nor $H$ vanishes at $0$. Moreover $H\equiv1-4C\pmod J$, so a common root would be $C=1/4$, whereas $J(1/4)=-7/16\ne0$. On the quartic branch, $BC=-1$ follows from $H=0$; hence $B,C$ are nonzero. Section~\ref{sec:roots} shows that there are exactly two real quartic roots.

Restoring the scale yields all $D_1$ solutions. The parameter transformation for $D_2$ reverses both $\alpha$ and $\delta$. Keeping track of this sign in the normalization gives the stated changes in $\beta,\gamma$ and vector coordinates.
\end{proof}

\section{Certified real roots and exhaustiveness}\label{sec:roots}
\subsection{Sturm sequences}
For a squarefree polynomial $F\in\mathbb Q[T]$, form $S_0=F$, $S_1=F'$ and $S_{i+1}=-\operatorname{rem}(S_{i-1},S_i)$ until a nonzero constant is reached. Let $\mathcal V(t)$ be the sign-variation count after zero entries are removed. Sturm's theorem gives $\mathcal V(a)-\mathcal V(b)$ roots in $(a,b)$ when the endpoints are not roots. Positive rescaling of chain members preserves the sign counts.

The exact counts are
\begin{center}\begin{tabular}{cccc}\toprule
Polynomial&$\mathcal V(-\infty)$&$\mathcal V(+\infty)$&Real roots\\\midrule
$P_-$&4&2&2\\$P_+$&4&2&2\\$Q_1$&5&1&4\\$Q_2$&4&2&2\\$H$&3&1&2\\\bottomrule
\end{tabular}\end{center}
Every finite decimal below is an exact rational endpoint. Each interval contains one root, and the intervals exhaust the real roots:
\begin{center}\small\begin{tabular}{cp{112mm}}\toprule
Polynomial&Isolating intervals\\\midrule
$P_-$&$(-0.493,-0.492)$, $(5.474,5.475)$\\
$P_+$&$(-5.475,-5.474)$, $(0.492,0.493)$\\
$Q_1$&$(-0.439,-0.438)$, $(-0.292,-0.291)$, $(1.318,1.319)$, $(3.497,3.498)$\\
$Q_2$&$(-6.599,-6.598)$, $(0.719,0.720)$\\
$H$&$(-2.053,-2.052)$, $(1.447,1.448)$\\\bottomrule
\end{tabular}\end{center}
As an explicit example, a Sturm chain for $H$ is
\[
H,\quad4C^3-6C+2,\quad\tfrac32C^2-\tfrac32C+1,
\quad\tfrac{14}{3}C+\tfrac23,\quad-\tfrac{61}{49}.
\]
The signs at $-\infty$ are $(+,-,+,-,-)$, with three variations; at $+\infty$ they are $(+,+,+,+,-)$, with one variation. There are therefore two real roots. They are approximately $-2.052300344614$ and $1.447622986855$, but their exact definitions remain the polynomial and intervals.

For the sextics, rational polynomial division gives chains ending in nonzero constants, proving squarefreeness. Evaluating the chains at each rational endpoint certifies the isolated roots individually. Their number equals the count on the whole real line, excluding any additional real roots. The exact endpoint variation counts and sign patterns are recorded in the computational appendix.

\subsection{Logical completeness of the classification}
Every solution first satisfies the universal tensor system. The elementary cases are exhausted by zero/nonzero splits, retaining all structure restrictions. In $G_2,G_4$, the resultant captures all projected candidates, exceptional factors are excluded separately, denominator coprimality gives unique lifts, and exact residual reduction proves sufficiency. In $G_7$, the elimination polynomial covers all possible real values of $C$, and each branch is described by an equal triangular ideal. The remaining one-variable equations are certified by Sturm counting.

The loci $\alpha=0$, $\beta=0$, $\gamma=0$, $\delta=0$ and $B+1=0$ have been retained whenever relevant. Generic invertibility of a coefficient matrix would not justify removing these loci. In particular, the affine vector-field families in $G_3,G_5,G_6,G_7$ are genuine parts of the solution set, not merely optional choices of representative.

\section{Geometric interpretation and concluding remarks}
\subsection{Dimensions at fixed structures}
For each admissible fixed structure, the vector field is unique in $G_2,G_4$, the $\delta\ne0$ cases of $G_6$, and the $G_7$ branches other than (a). In $G_3$, there are two free coordinates when $\gamma\ne0$, and three in the abelian case. Each $G_5$ family has two free coordinates. In $G_6$ with $\delta=\gamma=0$, the dimensions are two if $\beta=0$ and one if $\beta\ne0$. The $G_7$ family (a) has dimension two. These dimensions do not count scale or algebraic-root choices. They are preserved by the correspondence between distributions.

\subsection{A nontrivial balance of tensors}
Choose $G_6$ with $\alpha=\beta=0$, $\delta=1$, $\gamma=\sqrt2$ and $(x,y,z)=(-1/2,1,0)$. The adapted coefficients are
\[
s=v=u=k=1/\sqrt2,\quad t=w=-1/\sqrt2,\quad p=q=r=0.
\]
The nonzero independent entries of $S$ are $S_{33}=1/2$, $S_{13}=-1/4$, $S_{23}=-1/2$. The corresponding entries of $\LB_Vg$ are $-1$, $1/2$ and $1$. All other independent entries vanish. Thus $\LB_Vg+2S=0$ although both tensors are nonzero. Unlike the flat examples in $G_3$ and $G_5$, this illustrates a nonparallel vector field balancing nonzero symmetrized Ricci curvature.

\subsection{Scope and further problems}
Steadiness here concerns the Bott affine equation and does not identify these metrics with steady solutions of the ordinary Ricci-flow soliton equation. Nor have parameter points been identified under metric-preserving automorphisms or isometries. That would require an additional analysis of the simultaneous action on the metric and distribution.

Natural extensions include a general angle in the spacelike frame, nonconstant vector-field coefficients, and geometric equivalence among the listed parameter sets. A general angle introduces further exceptional rank loci. Non-left-invariant fields satisfy differential equations and do not obey the simple diagonal steady argument. These are separate problems, not extensions asserted by the present results.

\appendix
\section{Connection matrices and component systems}\label{app:systems}
All matrices in this appendix are written in the original rotated frame $\{\E_i\}$. The $j$th column of $A_i$ represents $\nabla^B_{\E_i}\E_j$. Vector coordinates are those of \eqref{eq:xyz}. Put
\[
F_{ij}=(\LB_Vg+2\Ric+2\mu g)_{ij}.
\]
For shorter expressions, each system lists the equations corresponding, in order, to
\[
F_{11},\quad F_{12},\quad F_{13}/\sqrt2,\quad F_{22},\quad F_{23}/\sqrt2,\quad F_{33}.
\]
Only nonzero constant row scalings have been used. The quantities $K_j$ on the right depend only on the structure constants and are defined locally in each subsection. Full curvature tensors can also be recovered from the displayed connection matrices, the transformed brackets and \eqref{eq:matrixR}.
\clearpage\subsection{$G_1$; distribution $D_1$}
\begingroup\small The connection matrices are\begin{gather*}
A_1=\begin{pmatrix}0 & 0 & 0\\0 & - \frac{\sqrt{2} \alpha}{2} & 0\\0 & 0 & 0\end{pmatrix}\\[3pt]A_2=\begin{pmatrix}\frac{\sqrt{2} \alpha}{2} & 0 & - \alpha - \beta\\0 & 0 & 0\\- \beta & 0 & - \frac{\sqrt{2} \alpha}{2}\end{pmatrix}\\[3pt]A_3=\begin{pmatrix}0 & 0 & - \frac{\sqrt{2} \alpha}{2}\\0 & 0 & 0\\- \frac{\sqrt{2} \alpha}{2} & 0 & 0\end{pmatrix}
\end{gather*}The six component equations are\[\left\{\begin{aligned}
2 \mu&=K_{1},\\[3pt]\alpha x - \alpha y + z \left(- \alpha - \beta\right)&=K_{2},\\[3pt]- \frac{\alpha z}{2}&=K_{3},\\[3pt]2 \mu&=0,\\[3pt]\frac{\alpha z}{2} + \beta x&=K_{5},\\[3pt]2 \alpha x - 2 \mu&=K_{6}.\end{aligned}\right.\]
Here the symbols $K_j$ are local to this subsection:\begin{align*}
K_{1}&=\alpha^{2} - 2 \alpha \beta + 2 \beta^{2},\\[3pt]K_{2}&=- \frac{\alpha^{2}}{2},\\[3pt]K_{3}&=- \alpha \left(\alpha - \beta\right),\\[3pt]K_{5}&=\frac{\alpha \left(\alpha + \beta\right)}{2},\\[3pt]K_{6}&=\alpha^{2} - 2 \beta^{2}.\end{align*}
The structure restrictions are $\alpha\ne0$.\endgroup
\clearpage\subsection{$G_1$; distribution $D_2$}
\begingroup\small The connection matrices are\begin{gather*}
A_1=\begin{pmatrix}0 & 0 & 0\\0 & - \frac{\sqrt{2} \alpha}{2} & - \alpha + \beta\\0 & \beta & \frac{\sqrt{2} \alpha}{2}\end{pmatrix}\\[3pt]A_2=\begin{pmatrix}\frac{\sqrt{2} \alpha}{2} & 0 & 0\\0 & 0 & 0\\0 & 0 & 0\end{pmatrix}\\[3pt]A_3=\begin{pmatrix}0 & 0 & 0\\0 & 0 & \frac{\sqrt{2} \alpha}{2}\\0 & \frac{\sqrt{2} \alpha}{2} & 0\end{pmatrix}
\end{gather*}The six component equations are\[\left\{\begin{aligned}
2 \mu&=0,\\[3pt]\alpha x - \alpha y + z \left(- \alpha + \beta\right)&=K_{2},\\[3pt]- \frac{\alpha z}{2} - \beta y&=K_{3},\\[3pt]2 \mu&=K_{4},\\[3pt]\frac{\alpha z}{2}&=K_{5},\\[3pt]- 2 \alpha y - 2 \mu&=K_{6}.\end{aligned}\right.\]
Here the symbols $K_j$ are local to this subsection:\begin{align*}
K_{2}&=- \frac{\alpha^{2}}{2},\\[3pt]K_{3}&=- \frac{\alpha \left(\alpha - \beta\right)}{2},\\[3pt]K_{4}&=\alpha^{2} + 2 \alpha \beta + 2 \beta^{2},\\[3pt]K_{5}&=\alpha \left(\alpha + \beta\right),\\[3pt]K_{6}&=\alpha^{2} - 2 \beta^{2}.\end{align*}
The structure restrictions are $\alpha\ne0$.\endgroup
\clearpage\subsection{$G_2$; distribution $D_1$}
\begingroup\small The connection matrices are\begin{gather*}
A_1=\begin{pmatrix}0 & 0 & \frac{\alpha}{2} - \frac{\beta}{2}\\0 & \frac{\sqrt{2} \gamma}{2} & 0\\\frac{\alpha}{2} - \frac{\beta}{2} & 0 & 0\end{pmatrix}\\[3pt]A_2=\begin{pmatrix}\frac{\sqrt{2} \gamma}{2} & 0 & - \frac{\alpha}{2} - \frac{\beta}{2}\\0 & 0 & 0\\- \beta & 0 & - \frac{\sqrt{2} \gamma}{2}\end{pmatrix}\\[3pt]A_3=\begin{pmatrix}0 & 0 & \frac{\sqrt{2} \gamma}{2}\\0 & \frac{\alpha}{2} - \frac{\beta}{2} & 0\\\frac{\sqrt{2} \gamma}{2} & 0 & 0\end{pmatrix}
\end{gather*}The six component equations are\[\left\{\begin{aligned}
2 \mu + z \left(\alpha - \beta\right)&=K_{1},\\[3pt]\gamma x + \gamma y - \frac{z \left(\alpha + \beta\right)}{2}&=K_{2},\\[3pt]\frac{\gamma z}{2} - \frac{x \left(\alpha - \beta\right)}{2}&=K_{3},\\[3pt]2 \mu&=0,\\[3pt]\beta x + \frac{\gamma z}{2} + \frac{y \left(\alpha - \beta\right)}{2}&=K_{5},\\[3pt]- 2 \gamma x - 2 \mu&=K_{6}.\end{aligned}\right.\]
Here the symbols $K_j$ are local to this subsection:\begin{align*}
K_{1}&=- \frac{\alpha^{2} - 4 \alpha \beta - \beta^{2} - 2 \gamma^{2}}{2},\\[3pt]K_{2}&=- \frac{\alpha \beta - \beta^{2} - \gamma^{2}}{2},\\[3pt]K_{3}&=\frac{\gamma \left(\alpha + \beta\right)}{2},\\[3pt]K_{5}&=- \frac{\alpha \gamma}{2},\\[3pt]K_{6}&=- 2 \alpha \beta - \gamma^{2}.\end{align*}
The structure restrictions are $\gamma\ne0$.\endgroup
\clearpage\subsection{$G_2$; distribution $D_2$}
\begingroup\small The connection matrices are\begin{gather*}
A_1=\begin{pmatrix}0 & 0 & 0\\0 & \frac{\sqrt{2} \gamma}{2} & \frac{\alpha}{2} + \frac{\beta}{2}\\0 & \beta & - \frac{\sqrt{2} \gamma}{2}\end{pmatrix}\\[3pt]A_2=\begin{pmatrix}\frac{\sqrt{2} \gamma}{2} & 0 & 0\\0 & 0 & - \frac{\alpha}{2} + \frac{\beta}{2}\\0 & - \frac{\alpha}{2} + \frac{\beta}{2} & 0\end{pmatrix}\\[3pt]A_3=\begin{pmatrix}- \frac{\alpha}{2} + \frac{\beta}{2} & 0 & 0\\0 & 0 & \frac{\sqrt{2} \gamma}{2}\\0 & \frac{\sqrt{2} \gamma}{2} & 0\end{pmatrix}
\end{gather*}The six component equations are\[\left\{\begin{aligned}
2 \mu&=0,\\[3pt]\gamma x + \gamma y + \frac{z \left(\alpha + \beta\right)}{2}&=K_{2},\\[3pt]- \beta y + \frac{\gamma z}{2} - \frac{x \left(\alpha - \beta\right)}{2}&=K_{3},\\[3pt]2 \mu + z \left(- \alpha + \beta\right)&=K_{4},\\[3pt]\frac{\gamma z}{2} + \frac{y \left(\alpha - \beta\right)}{2}&=K_{5},\\[3pt]- 2 \gamma y - 2 \mu&=K_{6}.\end{aligned}\right.\]
Here the symbols $K_j$ are local to this subsection:\begin{align*}
K_{2}&=- \frac{\alpha \beta - \beta^{2} - \gamma^{2}}{2},\\[3pt]K_{3}&=\frac{\alpha \gamma}{2},\\[3pt]K_{4}&=- \frac{\alpha^{2} - 4 \alpha \beta - \beta^{2} - 2 \gamma^{2}}{2},\\[3pt]K_{5}&=- \frac{\gamma \left(\alpha + \beta\right)}{2},\\[3pt]K_{6}&=- 2 \alpha \beta - \gamma^{2}.\end{align*}
The structure restrictions are $\gamma\ne0$.\endgroup
\clearpage\subsection{$G_3$; distribution $D_1$}
\begingroup\small The connection matrices are\begin{gather*}
A_1=\begin{pmatrix}0 & 0 & \frac{\alpha}{2} - \frac{\beta}{2}\\0 & 0 & 0\\\frac{\alpha}{2} - \frac{\beta}{2} & 0 & 0\end{pmatrix}\\[3pt]A_2=\begin{pmatrix}0 & 0 & - \frac{\alpha}{2} - \frac{\beta}{2}\\0 & 0 & 0\\- \gamma & 0 & 0\end{pmatrix}\\[3pt]A_3=\begin{pmatrix}0 & 0 & 0\\0 & \frac{\alpha}{2} - \frac{\beta}{2} & 0\\0 & 0 & 0\end{pmatrix}
\end{gather*}The six component equations are\[\left\{\begin{aligned}
2 \mu + z \left(\alpha - \beta\right)&=K_{1},\\[3pt]- \frac{z \left(\alpha + \beta\right)}{2}&=K_{2},\\[3pt]- \frac{x \left(\alpha - \beta\right)}{2}&=0,\\[3pt]2 \mu&=0,\\[3pt]\gamma x + \frac{y \left(\alpha - \beta\right)}{2}&=0,\\[3pt]- 2 \mu&=K_{6}.\end{aligned}\right.\]
Here the symbols $K_j$ are local to this subsection:\begin{align*}
K_{1}&=\begin{aligned} &- \frac{\alpha^{2}}{2} + \alpha \beta + \alpha \gamma - \frac{\beta^{2}}{2}\\ &\quad +\beta \gamma\end{aligned},\\[3pt]K_{2}&=- \frac{\gamma \left(\alpha - \beta\right)}{2},\\[3pt]K_{6}&=- 2 \alpha \beta.\end{align*}
The structure restrictions are $\alpha,\beta,\gamma\in\R$.\endgroup
\clearpage\subsection{$G_3$; distribution $D_2$}
\begingroup\small The connection matrices are\begin{gather*}
A_1=\begin{pmatrix}0 & 0 & 0\\0 & 0 & \frac{\alpha}{2} + \frac{\beta}{2}\\0 & \gamma & 0\end{pmatrix}\\[3pt]A_2=\begin{pmatrix}0 & 0 & 0\\0 & 0 & - \frac{\alpha}{2} + \frac{\beta}{2}\\0 & - \frac{\alpha}{2} + \frac{\beta}{2} & 0\end{pmatrix}\\[3pt]A_3=\begin{pmatrix}- \frac{\alpha}{2} + \frac{\beta}{2} & 0 & 0\\0 & 0 & 0\\0 & 0 & 0\end{pmatrix}
\end{gather*}The six component equations are\[\left\{\begin{aligned}
2 \mu&=0,\\[3pt]\frac{z \left(\alpha + \beta\right)}{2}&=K_{2},\\[3pt]- \gamma y - \frac{x \left(\alpha - \beta\right)}{2}&=0,\\[3pt]2 \mu + z \left(- \alpha + \beta\right)&=K_{4},\\[3pt]\frac{y \left(\alpha - \beta\right)}{2}&=0,\\[3pt]- 2 \mu&=K_{6}.\end{aligned}\right.\]
Here the symbols $K_j$ are local to this subsection:\begin{align*}
K_{2}&=- \frac{\gamma \left(\alpha - \beta\right)}{2},\\[3pt]K_{4}&=\begin{aligned} &- \frac{\alpha^{2}}{2} + \alpha \beta + \alpha \gamma - \frac{\beta^{2}}{2}\\ &\quad +\beta \gamma\end{aligned},\\[3pt]K_{6}&=- 2 \alpha \beta.\end{align*}
The structure restrictions are $\alpha,\beta,\gamma\in\R$.\endgroup
\clearpage\subsection{$G_4$; distribution $D_1$}
\begingroup\small The connection matrices are\begin{gather*}
A_1=\begin{pmatrix}0 & 0 & \frac{\alpha}{2} - \frac{\beta}{2}\\0 & - \frac{\sqrt{2}}{2} & 0\\\frac{\alpha}{2} - \frac{\beta}{2} & 0 & 0\end{pmatrix}\\[3pt]A_2=\begin{pmatrix}- \frac{\sqrt{2}}{2} & 0 & - \frac{\alpha}{2} - \frac{\beta}{2}\\0 & 0 & 0\\- \beta + 2 \eta & 0 & \frac{\sqrt{2}}{2}\end{pmatrix}\\[3pt]A_3=\begin{pmatrix}0 & 0 & - \frac{\sqrt{2}}{2}\\0 & \frac{\alpha}{2} - \frac{\beta}{2} & 0\\- \frac{\sqrt{2}}{2} & 0 & 0\end{pmatrix}
\end{gather*}The six component equations are\[\left\{\begin{aligned}
2 \mu + z \left(\alpha - \beta\right)&=K_{1},\\[3pt]- x - y - \frac{z \left(\alpha + \beta\right)}{2}&=K_{2},\\[3pt]- \frac{x \left(\alpha - \beta\right)}{2} - \frac{z}{2}&=K_{3},\\[3pt]2 \mu&=0,\\[3pt]x \left(\beta - 2 \eta\right) + \frac{y \left(\alpha - \beta\right)}{2} - \frac{z}{2}&=K_{5},\\[3pt]- 2 \mu + 2 x&=K_{6}.\end{aligned}\right.\]
Here the symbols $K_j$ are local to this subsection:\begin{align*}
K_{1}&=\begin{aligned} &- \frac{\alpha^{2}}{2} - 2 \alpha \eta + \frac{\beta^{2}}{2} + 1\\ &\quad +2 \alpha \beta - 2 \beta \eta\end{aligned},\\[3pt]K_{2}&=\begin{aligned} &\alpha \eta + \frac{\beta^{2}}{2} - \beta \eta + \frac{1}{2}\\ &\quad - \frac{\alpha \beta}{2}\end{aligned},\\[3pt]K_{3}&=- \frac{\alpha + \beta}{2},\\[3pt]K_{5}&=\frac{\alpha}{2},\\[3pt]K_{6}&=- 2 \alpha \beta - 1.\end{align*}
The structure restrictions are $\eta=\pm1$.\endgroup
\clearpage\subsection{$G_4$; distribution $D_2$}
\begingroup\small The connection matrices are\begin{gather*}
A_1=\begin{pmatrix}0 & 0 & 0\\0 & - \frac{\sqrt{2}}{2} & \frac{\alpha}{2} + \frac{\beta}{2}\\0 & \beta - 2 \eta & \frac{\sqrt{2}}{2}\end{pmatrix}\\[3pt]A_2=\begin{pmatrix}- \frac{\sqrt{2}}{2} & 0 & 0\\0 & 0 & - \frac{\alpha}{2} + \frac{\beta}{2}\\0 & - \frac{\alpha}{2} + \frac{\beta}{2} & 0\end{pmatrix}\\[3pt]A_3=\begin{pmatrix}- \frac{\alpha}{2} + \frac{\beta}{2} & 0 & 0\\0 & 0 & - \frac{\sqrt{2}}{2}\\0 & - \frac{\sqrt{2}}{2} & 0\end{pmatrix}
\end{gather*}The six component equations are\[\left\{\begin{aligned}
2 \mu&=0,\\[3pt]- x - y + \frac{z \left(\alpha + \beta\right)}{2}&=K_{2},\\[3pt]- \frac{x \left(\alpha - \beta\right)}{2} + y \left(- \beta + 2 \eta\right) - \frac{z}{2}&=K_{3},\\[3pt]2 \mu + z \left(- \alpha + \beta\right)&=K_{4},\\[3pt]\frac{y \left(\alpha - \beta\right)}{2} - \frac{z}{2}&=K_{5},\\[3pt]- 2 \mu + 2 y&=K_{6}.\end{aligned}\right.\]
Here the symbols $K_j$ are local to this subsection:\begin{align*}
K_{2}&=\begin{aligned} &\alpha \eta + \frac{\beta^{2}}{2} - \beta \eta + \frac{1}{2}\\ &\quad - \frac{\alpha \beta}{2}\end{aligned},\\[3pt]K_{3}&=- \frac{\alpha}{2},\\[3pt]K_{4}&=\begin{aligned} &- \frac{\alpha^{2}}{2} - 2 \alpha \eta + \frac{\beta^{2}}{2} + 1\\ &\quad +2 \alpha \beta - 2 \beta \eta\end{aligned},\\[3pt]K_{5}&=\frac{\alpha + \beta}{2},\\[3pt]K_{6}&=- 2 \alpha \beta - 1.\end{align*}
The structure restrictions are $\eta=\pm1$.\endgroup
\clearpage\subsection{$G_5$; distribution $D_1$}
\begingroup\small The connection matrices are\begin{gather*}
A_1=\begin{pmatrix}0 & 0 & \frac{\alpha}{2} + \frac{\beta}{2} + \frac{\gamma}{2} + \frac{\delta}{2}\\0 & 0 & 0\\\frac{\alpha}{2} + \frac{\beta}{2} + \frac{\gamma}{2} + \frac{\delta}{2} & 0 & 0\end{pmatrix}\\[3pt]A_2=\begin{pmatrix}0 & 0 & \frac{\alpha}{2} + \frac{\beta}{2} - \frac{\gamma}{2} - \frac{\delta}{2}\\0 & 0 & 0\\0 & 0 & 0\end{pmatrix}\\[3pt]A_3=\begin{pmatrix}0 & 0 & 0\\0 & - \frac{\alpha}{2} + \frac{\beta}{2} + \frac{\gamma}{2} - \frac{\delta}{2} & 0\\0 & 0 & 0\end{pmatrix}
\end{gather*}The six component equations are\[\left\{\begin{aligned}
2 \mu + z \left(\alpha + \beta + \gamma + \delta\right)&=K_{1},\\[3pt]\frac{z \left(\alpha + \beta - \gamma - \delta\right)}{2}&=0,\\[3pt]- \frac{x \left(\alpha + \beta + \gamma + \delta\right)}{2}&=0,\\[3pt]2 \mu&=0,\\[3pt]- \frac{y \left(\alpha - \beta - \gamma + \delta\right)}{2}&=0,\\[3pt]- 2 \mu&=K_{6}.\end{aligned}\right.\]
Here the symbols $K_j$ are local to this subsection:\begin{align*}
K_{1}&=\begin{aligned} &- \frac{\alpha^{2}}{2} - \frac{\beta^{2}}{2} - \frac{\gamma^{2}}{2} - \frac{\delta^{2}}{2}\\ &\quad - \alpha \beta - \alpha \gamma - \alpha \delta - \beta \gamma\\ &\quad - \beta \delta - \gamma \delta\end{aligned},\\[3pt]K_{6}&=\begin{aligned} &\alpha^{2} + \alpha \beta + \alpha \gamma + \delta^{2}\\ &\quad +2 \beta \gamma + \beta \delta + \gamma \delta\end{aligned}.\end{align*}
The structure restrictions are $\alpha+\delta\ne0,\quad\alpha\gamma+\beta\delta=0$.\endgroup
\clearpage\subsection{$G_5$; distribution $D_2$}
\begingroup\small The connection matrices are\begin{gather*}
A_1=\begin{pmatrix}0 & 0 & 0\\0 & 0 & \frac{\alpha}{2} - \frac{\beta}{2} + \frac{\gamma}{2} - \frac{\delta}{2}\\0 & 0 & 0\end{pmatrix}\\[3pt]A_2=\begin{pmatrix}0 & 0 & 0\\0 & 0 & \frac{\alpha}{2} - \frac{\beta}{2} - \frac{\gamma}{2} + \frac{\delta}{2}\\0 & \frac{\alpha}{2} - \frac{\beta}{2} - \frac{\gamma}{2} + \frac{\delta}{2} & 0\end{pmatrix}\\[3pt]A_3=\begin{pmatrix}- \frac{\alpha}{2} - \frac{\beta}{2} - \frac{\gamma}{2} - \frac{\delta}{2} & 0 & 0\\0 & 0 & 0\\0 & 0 & 0\end{pmatrix}
\end{gather*}The six component equations are\[\left\{\begin{aligned}
2 \mu&=0,\\[3pt]\frac{z \left(\alpha - \beta + \gamma - \delta\right)}{2}&=0,\\[3pt]- \frac{x \left(\alpha + \beta + \gamma + \delta\right)}{2}&=0,\\[3pt]2 \mu + z \left(\alpha - \beta - \gamma + \delta\right)&=K_{4},\\[3pt]- \frac{y \left(\alpha - \beta - \gamma + \delta\right)}{2}&=0,\\[3pt]- 2 \mu&=K_{6}.\end{aligned}\right.\]
Here the symbols $K_j$ are local to this subsection:\begin{align*}
K_{4}&=\begin{aligned} &- \frac{\alpha^{2}}{2} - \frac{\beta^{2}}{2} - \frac{\gamma^{2}}{2} - \frac{\delta^{2}}{2}\\ &\quad +\alpha \beta + \alpha \gamma + \beta \delta + \gamma \delta\\ &\quad - \alpha \delta - \beta \gamma\end{aligned},\\[3pt]K_{6}&=\begin{aligned} &\alpha^{2} - \alpha \beta - \alpha \gamma + \delta^{2}\\ &\quad +2 \beta \gamma - \beta \delta - \gamma \delta\end{aligned}.\end{align*}
The structure restrictions are $\alpha+\delta\ne0,\quad\alpha\gamma+\beta\delta=0$.\endgroup
\clearpage\subsection{$G_6$; distribution $D_1$}
\begingroup\small The connection matrices are\begin{gather*}
A_1=\begin{pmatrix}0 & 0 & \frac{\gamma}{2}\\0 & \frac{\sqrt{2} \alpha}{2} & 0\\\frac{\gamma}{2} & 0 & 0\end{pmatrix}\\[3pt]A_2=\begin{pmatrix}\frac{\sqrt{2} \alpha}{2} & 0 & \frac{\gamma}{2}\\0 & 0 & 0\\\beta & 0 & \frac{\sqrt{2} \delta}{2}\end{pmatrix}\\[3pt]A_3=\begin{pmatrix}0 & 0 & - \frac{\sqrt{2} \delta}{2}\\0 & \frac{\gamma}{2} & 0\\- \frac{\sqrt{2} \delta}{2} & 0 & 0\end{pmatrix}
\end{gather*}The six component equations are\[\left\{\begin{aligned}
\gamma z + 2 \mu&=K_{1},\\[3pt]\alpha x + \alpha y + \frac{\gamma z}{2}&=K_{2},\\[3pt]- \frac{\gamma x}{2} - \frac{\delta z}{2}&=K_{3},\\[3pt]2 \mu&=0,\\[3pt]- \beta x + \frac{\gamma y}{2} - \frac{\delta z}{2}&=K_{5},\\[3pt]2 \delta x - 2 \mu&=K_{6}.\end{aligned}\right.\]
Here the symbols $K_j$ are local to this subsection:\begin{align*}
K_{1}&=\frac{2 \beta \gamma - \gamma^{2} + 2 \delta^{2}}{2},\\[3pt]K_{2}&=\frac{2 \alpha^{2} + \alpha \delta + \beta \gamma}{2},\\[3pt]K_{3}&=- \frac{\gamma \left(\alpha - \delta\right)}{4},\\[3pt]K_{5}&=\frac{\gamma \left(\alpha + \delta\right)}{2},\\[3pt]K_{6}&=- \delta^{2}.\end{align*}
The structure restrictions are $\alpha+\delta\ne0,\quad\alpha\gamma-\beta\delta=0$.\endgroup
\clearpage\subsection{$G_6$; distribution $D_2$}
\begingroup\small The connection matrices are\begin{gather*}
A_1=\begin{pmatrix}0 & 0 & 0\\0 & \frac{\sqrt{2} \alpha}{2} & - \frac{\gamma}{2}\\0 & - \beta & \frac{\sqrt{2} \delta}{2}\end{pmatrix}\\[3pt]A_2=\begin{pmatrix}\frac{\sqrt{2} \alpha}{2} & 0 & 0\\0 & 0 & - \frac{\gamma}{2}\\0 & - \frac{\gamma}{2} & 0\end{pmatrix}\\[3pt]A_3=\begin{pmatrix}- \frac{\gamma}{2} & 0 & 0\\0 & 0 & - \frac{\sqrt{2} \delta}{2}\\0 & - \frac{\sqrt{2} \delta}{2} & 0\end{pmatrix}
\end{gather*}The six component equations are\[\left\{\begin{aligned}
2 \mu&=0,\\[3pt]\alpha x + \alpha y - \frac{\gamma z}{2}&=K_{2},\\[3pt]\beta y - \frac{\gamma x}{2} - \frac{\delta z}{2}&=K_{3},\\[3pt]- \gamma z + 2 \mu&=K_{4},\\[3pt]\frac{\gamma y}{2} - \frac{\delta z}{2}&=K_{5},\\[3pt]2 \delta y - 2 \mu&=K_{6}.\end{aligned}\right.\]
Here the symbols $K_j$ are local to this subsection:\begin{align*}
K_{2}&=\frac{2 \alpha^{2} + \alpha \delta + \beta \gamma}{2},\\[3pt]K_{3}&=- \frac{\gamma \left(\alpha + \delta\right)}{2},\\[3pt]K_{4}&=\frac{2 \beta \gamma - \gamma^{2} + 2 \delta^{2}}{2},\\[3pt]K_{5}&=\frac{\gamma \left(\alpha - \delta\right)}{4},\\[3pt]K_{6}&=- \delta^{2}.\end{align*}
The structure restrictions are $\alpha+\delta\ne0,\quad\alpha\gamma-\beta\delta=0$.\endgroup
\clearpage\subsection{$G_7$; distribution $D_1$}
\begingroup\small The connection matrices are\begin{gather*}
A_1=\begin{pmatrix}0 & 0 & \frac{\alpha}{2} + \frac{\beta}{2} + \frac{\gamma}{2} + \frac{\delta}{2}\\0 & \frac{\sqrt{2} \left(\alpha - \beta\right)}{2} & 0\\\frac{\alpha}{2} + \frac{\beta}{2} + \frac{\gamma}{2} + \frac{\delta}{2} & 0 & 0\end{pmatrix}\\[3pt]A_2=\begin{pmatrix}\frac{\sqrt{2} \left(- \alpha - \beta\right)}{2} & 0 & \frac{\alpha}{2} + \frac{\beta}{2} - \frac{\gamma}{2} - \frac{\delta}{2}\\0 & 0 & 0\\- \beta & 0 & \frac{\sqrt{2} \left(\beta - \delta\right)}{2}\end{pmatrix}\\[3pt]A_3=\begin{pmatrix}0 & 0 & - \frac{\sqrt{2} \beta}{2} - \frac{\sqrt{2} \delta}{2}\\0 & - \frac{\alpha}{2} + \frac{\beta}{2} + \frac{\gamma}{2} - \frac{\delta}{2} & 0\\- \frac{\sqrt{2} \beta}{2} - \frac{\sqrt{2} \delta}{2} & 0 & 0\end{pmatrix}
\end{gather*}The six component equations are\[\left\{\begin{aligned}
2 \mu + z \left(\alpha + \beta + \gamma + \delta\right)&=K_{1},\\[3pt]x \left(- \alpha - \beta\right) + y \left(\alpha - \beta\right) + \frac{z \left(\alpha + \beta - \gamma - \delta\right)}{2}&=K_{2},\\[3pt]- \frac{x \left(\alpha + \beta + \gamma + \delta\right)}{2} - \frac{z \left(\beta + \delta\right)}{2}&=K_{3},\\[3pt]2 \mu&=0,\\[3pt]\beta x - \frac{y \left(\alpha - \beta - \gamma + \delta\right)}{2} - \frac{z \left(\beta - \delta\right)}{2}&=K_{5},\\[3pt]- 2 \mu + 2 x \left(\beta + \delta\right)&=K_{6}.\end{aligned}\right.\]
Here the symbols $K_j$ are local to this subsection:\begin{align*}
K_{1}&=\begin{aligned} &- \frac{\alpha^{2}}{2} - \frac{\beta^{2}}{2} - \frac{\gamma^{2}}{2} + \frac{\delta^{2}}{2}\\ &\quad - \alpha \gamma - \alpha \delta - \gamma \delta\end{aligned},\\[3pt]K_{2}&=\begin{aligned} &- \alpha^{2} + \alpha \beta - \frac{\alpha \delta}{2} + \beta \delta\\ &\quad - \frac{\beta \gamma}{2}\end{aligned},\\[3pt]K_{3}&=\begin{aligned} &- \frac{\alpha^{2}}{4} + \frac{\alpha \delta}{2} + \frac{\beta^{2}}{2} - \frac{\delta^{2}}{4}\\ &\quad - \frac{\alpha \beta}{4} - \frac{\alpha \gamma}{4} - \frac{\beta \gamma}{2} + \frac{\beta \delta}{4}\\ &\quad +\frac{\gamma \delta}{4}\end{aligned},\\[3pt]K_{5}&=\begin{aligned} &\frac{\alpha^{2}}{2} + \frac{\alpha \delta}{2} + \frac{\beta \gamma}{2} + \frac{\delta^{2}}{2}\\ &\quad - \frac{\alpha \beta}{2} - \frac{\alpha \gamma}{2} - \frac{\beta \delta}{2} - \frac{\gamma \delta}{2}\end{aligned},\\[3pt]K_{6}&=\begin{aligned} &\alpha^{2} + \alpha \beta + \alpha \gamma - \beta^{2}\\ &\quad +2 \beta \gamma - \beta \delta + \gamma \delta\end{aligned}.\end{align*}
The structure restrictions are $\alpha+\delta\ne0,\quad\alpha\gamma=0$.\endgroup
\clearpage\subsection{$G_7$; distribution $D_2$}
\begingroup\small The connection matrices are\begin{gather*}
A_1=\begin{pmatrix}0 & 0 & 0\\0 & \frac{\sqrt{2} \left(\alpha - \beta\right)}{2} & \frac{\alpha}{2} - \frac{\beta}{2} + \frac{\gamma}{2} - \frac{\delta}{2}\\0 & \beta & \frac{\sqrt{2} \left(\beta + \delta\right)}{2}\end{pmatrix}\\[3pt]A_2=\begin{pmatrix}\frac{\sqrt{2} \left(- \alpha - \beta\right)}{2} & 0 & 0\\0 & 0 & \frac{\alpha}{2} - \frac{\beta}{2} - \frac{\gamma}{2} + \frac{\delta}{2}\\0 & \frac{\alpha}{2} - \frac{\beta}{2} - \frac{\gamma}{2} + \frac{\delta}{2} & 0\end{pmatrix}\\[3pt]A_3=\begin{pmatrix}- \frac{\alpha}{2} - \frac{\beta}{2} - \frac{\gamma}{2} - \frac{\delta}{2} & 0 & 0\\0 & 0 & - \frac{\sqrt{2} \beta}{2} + \frac{\sqrt{2} \delta}{2}\\0 & - \frac{\sqrt{2} \beta}{2} + \frac{\sqrt{2} \delta}{2} & 0\end{pmatrix}
\end{gather*}The six component equations are\[\left\{\begin{aligned}
2 \mu&=0,\\[3pt]x \left(- \alpha - \beta\right) + y \left(\alpha - \beta\right) + \frac{z \left(\alpha - \beta + \gamma - \delta\right)}{2}&=K_{2},\\[3pt]- \beta y - \frac{x \left(\alpha + \beta + \gamma + \delta\right)}{2} - \frac{z \left(\beta + \delta\right)}{2}&=K_{3},\\[3pt]2 \mu + z \left(\alpha - \beta - \gamma + \delta\right)&=K_{4},\\[3pt]- \frac{y \left(\alpha - \beta - \gamma + \delta\right)}{2} - \frac{z \left(\beta - \delta\right)}{2}&=K_{5},\\[3pt]- 2 \mu + 2 y \left(\beta - \delta\right)&=K_{6}.\end{aligned}\right.\]
Here the symbols $K_j$ are local to this subsection:\begin{align*}
K_{2}&=\begin{aligned} &- \alpha^{2} - \alpha \beta - \frac{\alpha \delta}{2} - \beta \delta\\ &\quad - \frac{\beta \gamma}{2}\end{aligned},\\[3pt]K_{3}&=\begin{aligned} &- \frac{\alpha^{2}}{2} - \frac{\alpha \beta}{2} - \frac{\alpha \gamma}{2} - \frac{\delta^{2}}{2}\\ &\quad - \frac{\alpha \delta}{2} - \frac{\beta \gamma}{2} - \frac{\beta \delta}{2} - \frac{\gamma \delta}{2}\end{aligned},\\[3pt]K_{4}&=\begin{aligned} &- \frac{\alpha^{2}}{2} - \frac{\beta^{2}}{2} - \frac{\gamma^{2}}{2} + \frac{\delta^{2}}{2}\\ &\quad +\alpha \gamma - \alpha \delta + \gamma \delta\end{aligned},\\[3pt]K_{5}&=\begin{aligned} &\frac{\alpha^{2}}{4} - \frac{\beta^{2}}{2} + \frac{\beta \gamma}{2} + \frac{\delta^{2}}{4}\\ &\quad - \frac{\alpha \beta}{4} - \frac{\alpha \gamma}{4} - \frac{\alpha \delta}{2} + \frac{\beta \delta}{4}\\ &\quad +\frac{\gamma \delta}{4}\end{aligned},\\[3pt]K_{6}&=\begin{aligned} &\alpha^{2} - \alpha \beta - \beta^{2} + \beta \delta\\ &\quad - \alpha \gamma + 2 \beta \gamma - \gamma \delta\end{aligned}.\end{align*}
The structure restrictions are $\alpha+\delta\ne0,\quad\alpha\gamma=0$.\endgroup\clearpage

\section{Exact algebraic certificates and reproducibility}\label{app:cert}
\subsection{Sign and endpoint certificates}
The signs of the Sturm chains, in generation order, are listed below. The implementation first makes each defining polynomial monic, a positive rescaling that leaves all sign counts unchanged.
\begin{center}\small\begin{tabular}{ccc}\toprule
Polynomial&Signs at $-\infty$&Signs at $+\infty$\\\midrule
$P_-$ & $+,-,+,+,-,-,+$ & $+,+,+,-,-,+,+$ \\
$P_+$ & $+,-,+,+,-,-,+$ & $+,+,+,-,-,+,+$ \\
$Q_1$ & $+,-,+,-,+,-,-$ & $+,+,+,+,+,+,-$ \\
$Q_2$ & $+,-,+,-,-,+,+$ & $+,+,+,+,-,-,+$ \\
$H$ & $+,-,+,-,-$ & $+,+,+,+,-$ \\
\bottomrule\end{tabular}\end{center}
The following table gives reduced rational endpoints and exact variation counts. Each difference is one.
\begin{center}\begin{tabular}{cccc}\toprule
Polynomial&$(l,r)$&$\mathcal V(l)$&$\mathcal V(r)$\\\midrule
$P_-$ & $(-493/1000,-123/250)$ & 4 & 3 \\
$P_-$ & $(2737/500,219/40)$ & 3 & 2 \\
$P_+$ & $(-219/40,-2737/500)$ & 4 & 3 \\
$P_+$ & $(123/250,493/1000)$ & 3 & 2 \\
$Q_1$ & $(-439/1000,-219/500)$ & 5 & 4 \\
$Q_1$ & $(-73/250,-291/1000)$ & 4 & 3 \\
$Q_1$ & $(659/500,1319/1000)$ & 3 & 2 \\
$Q_1$ & $(3497/1000,1749/500)$ & 2 & 1 \\
$Q_2$ & $(-6599/1000,-3299/500)$ & 4 & 3 \\
$Q_2$ & $(719/1000,18/25)$ & 3 & 2 \\
$H$ & $(-2053/1000,-513/250)$ & 3 & 2 \\
$H$ & $(1447/1000,181/125)$ & 2 & 1 \\
\bottomrule\end{tabular}\end{center}
The sign-count principle follows from the remainder recursion. Away from zeros, all signs are locally constant. At a zero of an intermediate chain member, its neighbors have opposite signs, so the total variation count does not change. At a simple zero of the first member, the second member is its derivative and the variation count drops by one. The nonzero final constants certify that the defining polynomials and their derivatives are relatively prime. Endpoint differences therefore count every root exactly.

For clarity, after the corresponding normalizations and substitutions for $X,Y,Z$, the residual identities are
\[
\begin{array}{c|cc}
&F_{11}&F_{23}/\sqrt2\\\hline
G_2&f_2&g_2/8\\
G_4&f_4&-g_4/8.
\end{array}
\]
Thus the two-polynomial systems used in elimination are exactly equivalent to the two remaining tensor equations.

\subsection{Numerical identification of sextic branches}
The following values are rounded to six decimal places and are included solely to identify the exact algebraic branches. The $G_2$ table is normalized by $\gamma=1$; the $G_4$ table lists $(A,B,X,Y,Z)$ before restoring $\eta$. Rounding these values and substituting numerically will in general produce small nonzero residuals; all proofs use the exact polynomial representatives.

\begin{samepage}
\noindent\textbf{$G_2$ branches.}
\begin{center}
\begin{tabular}{crrrrr}
\toprule $\sigma$&$B$&$A$&$X$&$Y$&$Z$\\\midrule
$+1$&$-5.474309$&$-0.230093$&$1.759601$&$3.045398$&$3.523325$\\
$-1$&$-0.492365$&$3.644013$&$-1.294183$&$-0.656819$&$-2.201581$\\
$+1$&$0.492365$&$-3.644013$&$-1.294183$&$-0.656819$&$2.201581$\\
$-1$&$5.474309$&$0.230093$&$1.759601$&$3.045398$&$-3.523325$\\\bottomrule
\end{tabular}
\end{center}
\end{samepage}
\noindent\textbf{$G_4$ branches.}
\begin{center}\small
\begin{tabular}{crrrrr}
\toprule $j$&$B$&$A$&$X$&$Y$&$Z$\\\midrule
2&$-6.598819$&$-0.238867$&$-2.076242$&$-3.999627$&$6.367115$\\
1&$-0.438017$&$4.980325$&$1.681467$&$1.589187$&$-4.568452$\\
1&$-0.291489$&$0.459589$&$-0.366035$&$-1.031745$&$0.443020$\\
2&$0.719447$&$-0.774290$&$0.057061$&$0.400177$&$0.030392$\\
1&$1.318926$&$0.936046$&$-1.734576$&$-0.428686$&$1.590837$\\
1&$3.497863$&$0.209772$&$-1.233753$&$-1.081710$&$-0.349059$\\\bottomrule
\end{tabular}\end{center}
\subsection{Independent tensor reconstruction}
Let $T$ be the frame-change matrix in \eqref{eq:rotation}. The transformed constants are
\[
C_{ij}^k=\sum_{p,q,l}(T^{-1})_{kl}T_{pi}T_{qj}c_{pq}^l.
\]
The program below then uses the Koszul formula only in the prescribed blocks and projected brackets in mixed blocks. After computing $R_{ij}$, it contracts the unsymmetrized Ricci tensor. With $E=\diag(1,1,-1)$ and $N=(A_1\boldsymbol v,A_2\boldsymbol v,A_3\boldsymbol v)$, it forms
\[
F=N^TE+EN+\rho+\rho^T+2\mu E,\qquad
\boldsymbol v=(\sqrt2x,\sqrt2y,z)^T.
\]
This independent reconstruction prevents an assumed symmetric Ricci tensor or a copied component equation from hiding a sign error.

\subsection{Executable verification code}
The following script was run with SymPy 1.14.0 \cite{SymPy}. Save it as \texttt{verify\_classification.py} and run it in a Python environment with SymPy installed. It verifies tensor correspondence, resultants, denominator coprimality, lift identities, algebraic residuals, $G_7$ ideal equalities and all isolating intervals. The elementary sample substitutions supplement the exhaustive case proofs in the text; they do not replace those proofs. Computations use exact coefficients, and a failed assertion stops execution. The script writes \texttt{verification.txt} and \texttt{sturm.json}.
\begin{lstlisting}[language=Python]
import sys,json,pathlib
import sympy as S
sys.stdout.reconfigure(encoding='utf-8')
a,b,c,d,x,y,z,mu,eta=S.symbols('a b c d x y z mu eta', real=True)
rt=S.sqrt(2); E=S.diag(1,1,-1); T=S.Matrix([[1/rt,1/rt,0],[1/rt,-1/rt,0],[0,0,1]])
def bracket(i,j,cs):
 if i==j:return S.zeros(3,1)
 return S.Matrix(cs[(i,j)]) if i<j else -S.Matrix(cs[(j,i)])
def calc(cs,dist):
 cc={}
 for i,j in [(0,1),(0,2),(1,2)]:
  cc[i,j]=S.simplify(T.inv()*sum((T[p,i]*T[q,j]*bracket(p,q,cs) for p in range(3) for q in range(3)),S.zeros(3,1)))
 D={0,2} if dist==1 else {1,2}; A=[S.zeros(3) for i in range(3)]
 for i in range(3):
  for j in range(3):
   for k in range(3):
    if (i in D)==(j in D):
     if (k in D)==(j in D): A[i][k,j]=S.simplify((E[k,k]*bracket(i,j,cc)[k]-E[i,i]*bracket(j,k,cc)[i]+E[j,j]*bracket(k,i,cc)[j])/(2*E[k,k]))
    elif (k in D)==(j in D): A[i][k,j]=bracket(i,j,cc)[k]
 R={(i,j):S.simplify(A[i]*A[j]-A[j]*A[i]-sum((bracket(i,j,cc)[k]*A[k] for k in range(3)),S.zeros(3))) for i in range(3) for j in range(3)}
 rho=S.Matrix(3,3,lambda i,j:-sum(R[i,k][k,j] for k in range(3)))
 N=S.Matrix.hstack(*(m*S.Matrix([rt*x,rt*y,z]) for m in A))
 F=S.simplify(N.T*E+E*N+rho+rho.T+2*mu*E)
 return cc,A,rho,F
cs=[{(0,1):[a,0,-b],(0,2):[-a,-b,0],(1,2):[b,a,a]},
{(0,1):[0,c,-b],(0,2):[0,-b,-c],(1,2):[a,0,0]},
{(0,1):[0,0,-c],(0,2):[0,-b,0],(1,2):[a,0,0]},
{(0,1):[0,-1,2*eta-b],(0,2):[0,-b,1],(1,2):[a,0,0]},
{(0,1):[0,0,0],(0,2):[a,b,0],(1,2):[c,d,0]},
{(0,1):[0,a,b],(0,2):[0,c,d],(1,2):[0,0,0]},
{(0,1):[-a,-b,-b],(0,2):[a,b,b],(1,2):[c,d,d]}]
allF=[]; report=[]
def check(label,truth):
 assert truth,label
 report.append(label+': PASS'); print(report[-1],flush=True)
def zero(expr):return S.simplify(expr)==0
def reduced(expr,pol,var):return S.rem(S.together(expr).as_numer_denom()[0],pol,var)==0
for n,co in enumerate(cs,1):
 cc,A,rho,F=calc(co,1); _,AA,rr,FF=calc(co,2); allF.append(F.subs(mu,0))
 check('G%d steady diagonal'%n,F[1,1]==2*mu and FF[0,0]==2*mu)
 sub={x:y,y:x,z:-z}
 if n==1:sub[a]=-a
 if n in (5,7):sub.update({a:-a,d:-d})
 U=S.Matrix([[0,1,0],[1,0,0],[0,0,-1]])
 check('G%d D2 tensor equivalence'%n,all(zero(v) for v in FF-U*F.subs(sub,simultaneous=True)*U))
 print('G%d adapted brackets'%n,cc,flush=True)
f2=a**3*b-2*a*a*b*b+2*a*a+a*b**3-3*a*b-b*b-1
g2=2*a**4*b-2*a**3*b*b+3*a**3-2*a*a*b**3-a*a*b+2*a*b**4+9*a*b*b+10*a-3*b**3+6*b
P=lambda sig:16*b**6+64*sig*b**5-96*b**4+152*sig*b**3-144*b*b+88*sig*b-23
Ap=lambda sig:-(8*b**5+36*sig*b**4-28*b**3+70*sig*b*b-51*b+31*sig)/6
Q1=16*b**6-96*b**5+160*b**4-56*b**3-56*b*b+24*b+9
Q2=16*b**6+32*b**5-352*b**4+760*b**3-744*b*b+360*b-71
B1=-(16*b**5-96*b**4+172*b**3-116*b*b+10*b+15)/6
B2=(48*b**5+160*b**4-852*b**3+1116*b*b-562*b+81)/14
f4=f2+2*a+2*b;g4=g2+4*a*a-24*a*b+4*b*b-8
check('G2 resultant',S.expand(S.resultant(f2,g2,a)+b*P(1)*P(-1))==0)
check('G4 resultant sign',S.expand(S.resultant(f4,g4,a)+b*Q1*Q2)==0)
for label,f,g,pol,A0,idx in [('P+',f2,g2,P(1),Ap(1),1),('P-',f2,g2,P(-1),Ap(-1),1),('Q1',f4,g4,Q1,B1,3),('Q2',f4,g4,Q2,B2,3)]:
 X=a*b+S.Rational(1,2) if idx==1 else -a*b-S.Rational(1,2)
 Z=a+b+(a-b)*X if idx==1 else a+b-(a-b)*X
 Y=((a+b)*Z-a*b+b*b+1)/2-X if idx==1 else (a*b-2*a-b*b+2*b-(a+b)*Z-1)/2-X
 subs={x:X,y:Y,z:Z,c:1,eta:1}
 check(label+' all tensor residuals',all(reduced(v.subs(subs,simultaneous=True).subs(a,A0),pol,b) for v in allF[idx]))
 subres=S.subresultants(f,g,a)[-2]
 den=S.Poly(subres,a).coeff_monomial(a)
 check(label+' nonzero denominator',S.degree(S.gcd(den,pol),b)==0)
 print(label,'subresultant',S.factor(subres),flush=True)
 print(label,'Sturm',S.sturm(pol,b),flush=True)
 check(label+' squarefree',S.degree(S.gcd(pol,S.diff(pol,b)),b)==0)
F7=[S.expand(v) for v in [allF[6][0,0],allF[6][0,1],allF[6][0,2]/rt,allF[6][1,2]/rt,allF[6][2,2]]]
f71=[v.subs({a:1,c:0}) for v in F7]
base71=[2*x+1-b*d-d*d+d,2*y+2+3*d,2*z+2-b*d-d*d+d,b*b+2*b*d-2*b-d*d-2*d-1,d*(d*d+d+1)]
check('G7 alpha nonzero ideal equality',S.groebner(f71,x,y,z,b,d,domain=S.QQ)==S.groebner(base71,x,y,z,b,d,domain=S.QQ))
f70=[v.subs({a:0,d:1}) for v in F7]; H=c**4-3*c*c+2*c-1; J=c*c+2*c-1
G=S.groebner(f70,x,y,z,b,c,domain=S.QQ)
check('G7 elimination polynomial',G.reduce(c*c*J*H)[1]==0)
base0=[x+y-z+b*b/4+b/2+S.Rational(5,4),(b+1)*(y+1),b*b/2+(b+1)*z-S.Rational(1,2),(b+1)*(b*b+2*b-1)]
check('G7 C0 ideal equality',S.groebner([v.subs(c,0) for v in f70],x,y,z,b,domain=S.QQ)==S.groebner(base0,x,y,z,b,domain=S.QQ))
for label,pol,bas in [('quadratic',J,[x-c/2,y+1,z,b,J]),('quartic',H,[x+(c*c-1)/2,y+(c**3+c*c-2*c+3)/2,z+(c*c+c-1)/2,b+c**3-3*c+2,H])]:
 check('G7 '+label+' ideal equality',S.groebner(f70+[pol],x,y,z,b,c,domain=S.QQ)==S.groebner(bas,x,y,z,b,c,domain=S.QQ))
check('G3 residuals',all(zero(v.subs({a:0,b:0,x:0})) for v in allF[2]))
for sub in [{a:1,b:-1,c:0,d:0,y:0},{a:0,b:0,c:-1,d:1,y:0}]:check('G5 explicit residual',all(zero(v.subs(sub)) for v in allF[4]))
r0=(1-S.sqrt(5))/2
for sub in [{d:0,c:0,b:0,y:a-x},{d:0,c:0,x:0,y:a},{d:1,a:0,b:0,c:rt,x:-S.Rational(1,2),y:1,z:0},{d:1,a:r0,c:S.sqrt(-2*r0),b:r0*S.sqrt(-2*r0),x:-S.Rational(1,2),y:1+r0/2,z:r0*S.sqrt(-2*r0)/2}]:
 check('G6 explicit residual',all(zero(v.subs(sub,simultaneous=True)) for v in allF[5]))
intervals={'P-':[(-493,-492),(5474,5475)],'P+':[(-5475,-5474),(492,493)],'Q1':[(-439,-438),(-292,-291),(1318,1319),(3497,3498)],'Q2':[(-6599,-6598),(719,720)],'H':[(-2053,-2052),(1447,1448)]}
sturmdata=[]
for label,pol,var in [('P-',P(-1),b),('P+',P(1),b),('Q1',Q1,b),('Q2',Q2,b),('H',H,c)]:
 chain=S.sturm(pol,var)
 def signs_at(t):
  if t in [-S.oo,S.oo]:return [int(S.sign(S.LC(S.Poly(v,var)))*((-1)**S.degree(v,var) if t==-S.oo else 1)) for v in chain]
  return [int(S.sign(v.subs(var,t))) for v in chain]
 def variation(signs):
  s=[v for v in signs if v];return sum(s[i]!=s[i+1] for i in range(len(s)-1))
 signsminus=signs_at(-S.oo);signsplus=signs_at(S.oo)
 sturmdata.append({'label':label,'minus':signsminus,'plus':signsplus,'Vminus':variation(signsminus),'Vplus':variation(signsplus),'intervals':[]})
 for lo,hi in intervals[label]:
  l=S.Rational(lo,1000);h=S.Rational(hi,1000); vl=variation(signs_at(l));vh=variation(signs_at(h))
  check(label+' interval '+str((l,h)),vl-vh==1)
  sturmdata[-1]['intervals'].append([str(l),str(h),vl,vh])
 check(label+' real root total',variation(signsminus)-variation(signsplus)==len(intervals[label]))

for label,f,g,pol,A0 in [('P+',f2,g2,P(1),Ap(1)),('P-',f2,g2,P(-1),Ap(-1)),('Q1',f4,g4,Q1,B1),('Q2',f4,g4,Q2,B2)]:
 linear=S.subresultants(f,g,a)[-2]
 check(label+' lift identity',reduced(linear.subs(a,A0),pol,b))
check('G2 factors disjoint',S.degree(S.gcd(P(1),P(-1)),b)==0)
check('G4 factors disjoint',S.degree(S.gcd(Q1,Q2),b)==0)
check('G7 factors disjoint',S.gcd(c*J,H)==1 and J.subs(c,0)!=0)
check('H squarefree',S.gcd(H,S.diff(H,c))==1)
for n in [1,3]:
 X=a*b+S.Rational(1,2) if n==1 else -a*b-S.Rational(1,2)
 Z=a+b+(a-b)*X if n==1 else a+b-(a-b)*X
 Y=((a+b)*Z-a*b+b*b+1)/2-X if n==1 else (a*b-2*a-b*b+2*b-(a+b)*Z-1)/2-X
 rems=[S.factor(v.subs({x:X,y:Y,z:Z,c:1,eta:1},simultaneous=True)) for v in [allF[n][0,0],allF[n][1,2]/rt]]
 print('Residual pair G'+str(n+1),rems,flush=True)

pathlib.Path('verification.txt').write_text('\n'.join(report),encoding='utf-8')
pathlib.Path('sturm.json').write_text(json.dumps(sturmdata,indent=2),encoding='utf-8')
print('ALL CHECKS PASSED',flush=True)

\end{lstlisting}

\end{document}